\documentclass[11pt,reqno]{amsart}
\usepackage[T1]{fontenc}
\usepackage{graphicx}

\usepackage{color}
\definecolor{MyLinkColor}{rgb}{0,0,0.4}

\newcommand{\R}{{\mathbb R}}

\newcommand{\Z}{{\mathbb Z}}
\newcommand{\C}{{\mathbb C}}
\newcommand{\N}{{\mathbb N}}

\newcommand{\s}{\mathbb{S}}

\newcommand{\uu}{\ov{u}}
\newcommand{\zu}{\ov{U}}
\newcommand{\cF}{\mathcal{F}}
\newcommand{\cG}{\mathcal{G}}
\newcommand{\kL}{\mathcal{L}}
\newcommand{\cC}{\mathcal{C}}

\newcommand{\wh}{\widehat}
\newcommand{\wt}{\widetilde}

\newcommand{\ov}{\overline}
\newcommand{\vv}{\mathfrak{v}}
\newcommand{\ww}{\mathfrak{w}}
\newcommand{\ow}{\mathfrak{\overline{w}}}
\newcommand{\uw}{\mathfrak{\underline{w}}}
\newcommand{\zz}{\mathfrak{u}}
\newcommand{\bb}{\mathfrak{a}}
\newcommand{\p}{\partial}

\newcommand{\e}{\varepsilon}

\newcommand{\0}{\Omega}

\newcommand{\spa}{\mathop{\rm span}\nolimits}
\newcommand{\im}{\mathop{\rm Im}\nolimits}
\newcommand{\ke}{\mathop{\rm Ker}\nolimits}

\newtheorem{thm}{Theorem}[section]

\newtheorem{lemma}[thm]{Lemma}
\newtheorem{cor}[thm]{Corollary}
\theoremstyle{remark} 
\newtheorem{rem}[thm]{Remark}

\setlength{\oddsidemargin}{9.1mm}
\setlength{\evensidemargin}{9.1mm}
\setlength{\textwidth}{151mm}
\setlength{\textheight}{216mm}

\numberwithin{equation}{section}

\title[Capillary-gravity water waves with piecewise constant vorticity]{Existence of capillary-gravity water waves with piecewise constant vorticity}

\author[C. I. Martin]{Calin Iulian Martin}
\address{Institut f\" ur Mathematik, Universit\" at Wien, Nordbergstra{\ss}e 15,
1090 Wien, Austria}
\email{calin.martin@univie.ac.at}

\author[B.--V. Matioc]{Bogdan--Vasile Matioc}
\address{Institut f\" ur Mathematik, Universit\" at Wien, Nordbergstra{\ss}e 15,
1090 Wien, Austria}
\email{bogdan-vasile.matioc@univie.ac.at}

\subjclass[2010]{76B03, 76B45, 76B70, 47J15}
\keywords{Local bifurcation; picewise constant vorticity; capillarity-gravity waves; diffraction problem}


\usepackage[colorlinks=true,linkcolor=MyLinkColor,citecolor=MyLinkColor]{
hyperref} 

\begin{document}

\begin{abstract}
In this paper we construct  periodic capillarity-gravity water waves with a piecewise constant  vorticity distribution. 
They describe water waves traveling on superposed linearly sheared currents that have  different vorticities.
This is achieved by associating to the height function formulation of the water wave problem a  diffraction problem where we impose suitable
 transmission conditions on each line where the vorticity function has a jump.
The solutions of the diffraction problem, found by using local bifurcation theory, are the desired solutions of the hydrodynamical problem.   
\end{abstract}

\maketitle

\section{Introduction}\label{Sec:1}

We are concerned in this paper with the existence of steady periodic rotational waves interacting with currents  that possess a discontinuous vorticity distribution, 
a situation accounting for
sudden changes in a current and whose numerical simulations have only recently been undertaken, see \cite{KO1,KO2}. 
More precisely, we establish the existence of capillary-gravity waves propagating at constant speed over a flat bed and 
interacting with several vertically superposed and linearly sheared currents of different (constant) vorticities. 
On physical grounds we can justify this situation by the fact that  rotational  waves generated by wind possess a thin layer of high 
vorticity that is adjacent to the wave surface, while
in the near bed region there may exist currents   resulting from sediment transport along the ocean bed.

A rotational fluid is not only interesting as an intricate mathematical problem but also serves a very concrete physical situation since it models wave-current interactions among other
phenomena \cite{Con11, Jon, Thom}.
The waves we consider here are two-dimensional, have an apriori unknown free surface, and the vorticity function is piecewise constant.
Though the vorticity distribution considered in the context of pure gravity waves in \cite{CS11} corresponds to a merely bounded vorticity function, being more general than ours,
we have in addition to gravity also the surface tension as a restoring force.
This has the effect of adding a second order term in the top boundary condition of the height function formulation of the problem,
situation that makes the analysis more intricate.
We enhance that surface tension  appears in the dynamics of water waves in many physical situations one of which is that of wind blowing over a still 
fluid surface and giving rise to two-dimensional small amplitude wave trains driven by capillarity \cite{Ki65} which grow larger and turn into capillary-gravity waves.

In the irrotational regime, the local bifurcation picture was described   in \cite{MJ89} for waves traveling over a fluid layer 
of finite depth, respectively in \cite{JT85, JT86, RS81} for waves of infinite depth.
A particular feature in the irrotational case and  for waves with constant vorticity \cite{CM13x} is  
 that sometimes a mode interacts with another one of half its size giving rise 
to waves  with two crests within a period, so-called  Wilton ripples.
Capillary-gravity water waves with a constant vorticity and stagnation points have been shown to exist in \cite{CM12x} by using technics related to the ones employed 
in \cite{CV11} for pure gravity waves.
Allowing for a general H\"older continuous vorticity distribution,  the existence theory is further developed  in \cite{W06b} for flows without stagnation points.
 It is worth to mention that the many properties of capillary-gravity water waves, such as the regularity of the streamlines
and of the wave profile \cite{Hen10, DH11a, LW12x, AM12x}, or the description of particle trajectories within the fluid \cite{DH07}, were only recently considered
(see \cite{Co06, AC11, CoV, MaA10} for the case when surface tension is neglected). 

In this paper we consider a different context than in \cite{W06b}, namely that of waves with a step function like vorticity.
In order to prove our result we use the height function formulation of the water wave problem which 
 is obtained via the Dubreil-Jacotin transformation (see \cite{CS11} for details) and which has the advantage that the original free boundary problem is rendered into a quasilinear elliptic
problem in a fixed domain.
While in \cite{CS11} the authors worked with a weak formulation of this problem, we are not able to do so 
here. 
This is due to the fact that the top boundary condition is nonlinear and contains second order derivatives of the unknown.
We overcome this difficulty by associating to the height function formulation  a diffraction (or transmission) problem where we impose suitable transmission conditions on each horizontal line where the vorticity has a jump.
Then, using existing results for diffraction problems together with a Fourier multiplier argument, we are able to recast the mathematical problem as an abstract bifurcation problem in a functional analytic context which enables us to use local bifurcation theory. 
One of the difficulties in doing this is due to the lack or rigorous results concerning the $C^{2+\alpha}$-regularity of solutions to diffraction problems close to the interface where transmission conditions are imposed.
We emphasize that diffraction problems are not seldom, they appear when multiphase flows are considered, as is the case of the Muskat problem \cite{EMM12a}.
The solutions that we find solve the boundary conditions of the problem in classical sense and the quasilinear equation in the weak sense defined in \cite{CS11} and almost everywhere in the transformed fluid domain.

In addition to proving existence of waves with the already mentioned properties we derive, in the case of two underlying   currents, the dispersion relation which is an implicit
equation relating: the mean depth, the average thickness of the currents, 
the wavelength,  the constant vorticities of the currents, and 
the  relative speed at the crest.

The outline of the paper is as follows: in  Section \ref{Sec:2} we present the mathematical model and the main result.
Section \ref{Sec:3} is devoted to recasting the equation as an abstract bifurcation problem and to the study of the Fredholm property of an operator associated to a diffraction problem.
In Section \ref{Sec:4} we find necessary conditions for local bifurcation, and in Section \ref{Sec:5} we prove  the main result Theorem \ref{MT} and derive the dispersion relation.

\section{The mathematical model and the main result}\label{Sec:2}

\paragraph{\bf The mathematical model}
We consider herein two-dimensional periodic  waves over a rotational, inviscid, and incompressible fluid, that are  driven by the interplay of gravity and surface tension forces.
Moreover, the waves are assumed to move at constant wave speed $c>0$.
In a reference frame which moves in the same direction as the wave and with  speed $c$, the free surface of the wave is considered to be the graph $y=\eta(x).$

Assuming that the fluid is homogeneous (with unit density), in the fluid domain 
\[
\0_\eta:=\{(x,y)\,:\,\text{$ x\in\s $ and $-d<y<\eta(x)$}\},
\]
 the equations of motion are the steady-state Euler equations
\begin{subequations}\label{eq:P}
  \begin{equation}\label{eq:Euler}
\left\{
\begin{array}{rllll}
(\mathbf{u}-c) {\bf u}_x+{\bf v}{\bf u}_y&=&-{\bf P}_x,\\
({\bf u}-c) {\bf v}_x+{\bf v}{\bf v}_y&=&-{\bf P}_y-g,\\
{\bf u}_x+{\bf v}_y&=&0.
\end{array}
\right.
\end{equation}
We use ${\bf P}$ to denote the dynamic pressure, $({\bf u},{\bf v})$ is the velocity field,    $g$ stands for the gravity constant, and   $d$ is the average depth of the water.
Furthermore, $\s$ is the unit circle, meaning that $\eta, ({\bf u},{\bf v}), {\bf P}$ are $2\pi$-periodic in $x.$
The equations  \eqref{eq:Euler} are supplemented by the following boundary conditions
\begin{equation}\label{eq:BC}
\left\{
\begin{array}{rllll}
{\bf P}&=&{\bf P}_0-\sigma\eta''/(1+\eta'^2)^{3/2}&\text{on $ y=\eta(x)$},\\
{\bf v}&=&({\bf u}-c) \eta'&\text{on $ y=\eta(x)$},\\
{\bf v}&=&0 &\text{on $ y=-d$},
\end{array}
\right.
\end{equation}
with ${\bf P}_0$ denoting the constant atmospheric pressure and $\sigma>0$ being the surface tension coefficient. 
Since the flow is rotational, the vorticity of the flow is the scalar function 
\begin{equation}\label{vor}
\omega:= {\bf u}_y-{\bf v}_x\qquad\text{in $\ov\0_\eta$.}  
\end{equation}
\end{subequations} 

The problem \eqref{eq:P} can also be reformulated as a free boundary value problem, by introducing the stream function $\psi:\ov\0_\eta\to\R$ by the relation
\[
\psi(x,y):=-p_0+\int_{-d}^y({\bf u}(x,s)-c)\, ds\qquad \text{for $(x,y)\in\ov\0_\eta$}.
\]
We have that $\nabla\psi=(-{\bf v},{\bf u}-c)$ and, as shown in \cite{Con11,CoSt04}, the problem \eqref{eq:P} is equivalent to the following system
\begin{equation}\label{eq:psi}
\left\{
\begin{array}{rllll}
\Delta \psi&=&\gamma(-\psi)&\text{in}&\0_\eta,\\
\displaystyle\frac{|\nabla\psi|^2}{2}+g(y+d)-\sigma\frac{\eta''}{(1+\eta'^2)^{3/2}}&=&Q&\text{on} &y=\eta(x),\\
\psi&=&0&\text{on}&y=\eta(x),\\
\psi&=&-p_0&\text{on} &y=-d,
\end{array}
\right.
\end{equation}
the constant $p_0<0$ representing the relative mass flux, and $Q\in\R$ being the so-called total head.   
The function $\gamma,$ called the vorticity function, is obtained by making the additional assumption that the horizontal velocity of each fluid particle is less then the wave speed  
\begin{equation}\label{eq:cond1}
{\bf u}-c<0\qquad\text{in $\ov \0_\eta$,}
\end{equation}
condition valid for waves that are not near breaking \cite{T90}.
This assumption is needed in order to show, cf. \cite{CoSt04, BM11}, that 
\[
\omega(x,y)=\gamma(-\psi(x,y))\qquad\text{in $\ov\0_\eta$.}
\]

The assumption \eqref{eq:cond1} is also crucial when obtaining the third equivalent formulation of the problem \eqref{eq:P}, the height function formulation \eqref{PB}.
Namely, because of \eqref{eq:cond1}, the function
 $\Phi:\ov\0_\eta\to\ov\0$  defined by
\[
\Phi(x,y):=(q,p)(x,y):=(x,-\psi(x,y)),\qquad (x,y)\in\ov\0_\eta,
\]
where $\0:=\s\times(p_0,0),$ 
is a diffeomorphism.
Then, defining the height function   $h:\ov \0\to\R$ by  $h(q,p):=y+d$ for $(q,p)\in\ov\0,$ the problem \eqref{eq:psi}-\eqref{eq:cond1} is equivalent to 
the nonlinear boundary value problem
\begin{equation}\label{PB}
\left\{
\begin{array}{rllll}
(1+h_q^2)h_{pp}-2h_ph_qh_{pq}+h_p^2h_{qq}-\gamma(p)h_p^3&=&0&\text{in $\0$},\\
\displaystyle 1+h_q^2+(2gh-Q)h_p^2-2\sigma \frac{h_p^2h_{qq}}{(1+h_q^2)^{3/2}}&=&0&\text{on $p=0$},\\
h&=&0&\text{on $ p=p_0,$}
\end{array}
\right.
\end{equation}
subjected to the condition 
\begin{equation}\label{PBC}
 \min_{\ov \0}h_p>0.
\end{equation}

\paragraph{\bf The main result}
We construct in this paper solutions of the problem \eqref{PB}-\eqref{PBC} in the case when the vorticity function is a step function.
More precisely, we assume that there exists an integer $N\geq2$,   real constants $\gamma_1, \ldots \gamma_N$ with $\gamma_{i-1}\neq\gamma_i$ for $1\leq i\leq N$, and real numbers $p_0<p_1<\ldots <p_N=0$ such that
\begin{equation}\label{E:GA}
 \gamma(p):=\gamma_i\qquad\text{for $1\leq i\leq N$ and  $p\in(p_{i-1},p_{i})$}.
\end{equation}
At each $p=p_{i}$, $0\leq i\leq N$, the vorticity function may have (has if $1\leq i\leq N-1$) a discontinuity of the first kind.
Our solutions satisfy the boundary conditions  of \eqref{PB} in classical sense and the first equation  almost everywhere in $\0$ and in the following weak sense
\begin{equation}\label{PB1}
 \int_\0\left(\frac{h_q}{h_p}\phi_q-\left(\Gamma+\frac{1+h_q^2}{h_p^2}\right)\phi_p\right) d(q,p)=0\qquad\text{for all $\phi\in C^1_0(\0)$.}
\end{equation}
Hereby, the function $\Gamma$ is the anti-derivative of the vorticity function
\begin{equation}
 \Gamma(p):=\int_0^p\gamma(s)\, ds\qquad\text{for $p_0\leq p\leq0$.}
\end{equation}

Our main result is the following theorem.
\begin{thm}\label{MT}
Let $N\in\N, N\geq 2$, $p_0<p_1<\ldots <p_N=0$, $(\gamma_1,\ldots,\gamma_N)\in\R^N$ be given such that $\gamma_{i-1}\neq\gamma_i$ for $1\leq i\leq N$, 
and let the vorticity function $\gamma$ be defined by \eqref{E:GA}.

Then, given $\alpha\in(0,1),$ there exists a positive integer $n$ and    real-analytic curves $\cC_k,$ $k\in\N\setminus\{0\},$ consisting only of  solutions of problem
 \eqref{PB}-\eqref{PBC} with the property that each solution $h$
 on the curve satisfies $h\in C^{2-}(\ov\0)$, $h(\cdot,0)\in C^\infty(\s),$ and 
 \begin{equation}\label{E:R}
 h\in C^{2+\alpha}(\s\times[p_{i-1},p_{i}])\cap C^{\infty}(\s\times(p_{i-1},p_{i}))\qquad \text{for all $1\leq i\leq N.$}
 \end{equation}
 Each curve $\cC_k$  contains a laminar flow solution (all the streamlines being parallel to the flat bed)
 and all the other points on the curve correspond to solutions that have minimal period $2\pi/(kn) $, only one crest and trough per period, and are symmetric with respect to the crest line. 
 
 These solutions solve the last two equations of \eqref{PB} in  classical sense and the first equation almost everywhere in $\0$  
 (more precisely in $\0\setminus\left(\{\s\times\{p_i\}\,:\, 1\leq i\leq N-1\}\right)$) and in the weak sense defined by \eqref{PB1}.  
\end{thm}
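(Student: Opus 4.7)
The plan is to apply the Crandall--Rabinowitz local bifurcation theorem to an operator equation equivalent to \eqref{PB}--\eqref{PBC}, set in a H\"older-type space adapted to the piecewise structure of $\gamma$. The first step is to identify the family of laminar (trivial) solutions $h=H(p)$. Substituting into \eqref{PB} one gets $H'' =\gamma(p) (H')^3$ on each subinterval $(p_{i-1},p_i)$, which integrates explicitly; matching values across each $p_i$ and using $H(p_0)=0$ yields a one-parameter family $H(\,\cdot\,;\lambda)$, the parameter $\lambda$ playing the role of the relative speed at the crest. The top boundary condition at $p=0$ then fixes the total head $Q$ as a function of $\lambda$ and the structural constants.

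Next, I would reformulate \eqref{PB} as a diffraction problem on the disjoint union of the strips $\0_i:=\s\times(p_{i-1},p_i)$: the quasilinear equation on each $\0_i$, the natural transmission conditions $[h]=0$ and continuity of the appropriate flux on each interface $\s\times\{p_i\}$, together with the boundary conditions at $p=0$ and $p=-p_0$. Solutions are sought in the subspace of $2\pi/(kn)$-periodic, even-in-$q$ functions whose restriction to each $\overline{\0_i}$ lies in $C^{2+\alpha}$; this choice builds in the symmetry and regularity asserted in \eqref{E:R}. The capillary term makes the top boundary condition of second order in $q$, which prevents a direct appeal to standard diffraction theory; I would absorb the principal part of the $q$-derivatives into a nonlocal Dirichlet-to-Neumann-type Fourier multiplier, thereby reducing the top condition to one of first order in $q$ whose compatibility with elliptic transmission theory is known.

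Linearising the resulting operator around $H(\,\cdot\,;\lambda)$, one obtains a linear diffraction problem with piecewise constant coefficients. Expanding in the Fourier modes $\cos(kq)$, $k\in\N$, each mode reduces to an ODE system on $(p_0,0)$ with transmission conditions; this system can be solved in closed form, producing a dispersion function $D_k(\lambda)$. I would let $n$ be the smallest positive integer such that $D_n$ has a simple real root $\lambda^*$, so that the kernel of the linearisation at $(\lambda^*,H(\,\cdot\,;\lambda^*))$ is one-dimensional and spanned by $\cos(nq)\,\phi_n(p)$; the same argument at the modes $kn$, $k\geq 1$, produces the bifurcation values yielding the curves $\cC_k$.

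The main obstacle is the verification of the Fredholm property of the linearisation in the chosen function space, because the combination of the second-order capillary boundary condition with the interior transmission conditions falls outside the standard frameworks; the Fourier multiplier reduction sketched above is precisely what enables this step and allows one to identify the linearised operator as a compact perturbation of an isomorphism between the relevant H\"older spaces. Once Fredholmness of index zero is established, the transversality condition reduces to the concrete inequality $\partial_\lambda D_n(\lambda^*)\neq 0$, verifiable from the explicit ODE solution. Real-analyticity of the full nonlinear operator is immediate from the polynomial/rational structure of \eqref{PB}. The Crandall--Rabinowitz theorem then delivers the real-analytic curves $\cC_k$; the minimal period, single crest, and even symmetry follow by construction of the function space, while the classical sense of the boundary conditions and the strip-by-strip regularity \eqref{E:R} together with the weak form \eqref{PB1} follow by a bootstrap argument applied inside each $\0_i$ and up to the interfaces.
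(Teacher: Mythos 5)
Your overall strategy coincides with the paper's: laminar solutions parametrized by $\lambda$ with $Q=Q(\lambda)$, a diffraction reformulation with transmission conditions $[h]=[h_p]=0$ on each $\s\times\{p_i\}$, a Hölder setting of $2\pi/(kn)$-periodic even functions that is piecewise $C^{2+\alpha}$, mode-by-mode analysis of the linearization producing a dispersion function, and Crandall--Rabinowitz. The difference, and the gap, lies in where you deploy the Fourier-multiplier argument and what you take to be ``known''. You aim your nonlocal reduction at the second-order capillary term in the top boundary condition and then assert that the resulting transmission problem is covered by standard elliptic diffraction theory. In fact the top condition is the comparatively harmless part: it is of Venttsel type, handled in the paper by the results for Venttsel problems (or simply by solving $W-W_{qq}=\varphi$ on $\s$ and then a Dirichlet problem), with the remaining first- and zero-order trace terms absorbed as a compact perturbation. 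The genuine obstruction is that the classical theory for diffraction problems (Ladyzhenskaya--Ural'tseva) only yields $C^{1+\alpha}$ regularity up to the interior interfaces $p=p_i$ (and $C^{2+\alpha}$ away from them), so it does \emph{not} place the solution of the linearized problem in your space of piecewise $C^{2+\alpha}$ functions, and without that the claimed index-zero Fredholm property in the chosen Hölder setting is unproved. The paper closes exactly this hole: it reduces to a model transmission problem with a prescribed jump $\xi$ of the normal derivative across $p=p_1$, solves the Fourier modes explicitly, identifies the trace map $\xi\mapsto z|_{p=p_1}$ as a Fourier multiplier with symbol $\lambda_k$ satisfying Marcinkiewicz-type bounds, and concludes via a multiplier theorem that it maps $C^{1+\alpha}(\s)$ into $C^{2+\alpha}(\s)$, which is what upgrades the interface regularity (and, by a localization near each interface, handles general $N$). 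Your sketch contains no substitute for this step.

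Two further points are asserted rather than argued. First, transversality does not literally ``reduce to $\partial_\lambda D_n(\lambda^*)\neq 0$'' until you have characterized the range of the linearization; the paper does this through an $L^2$-type orthogonality condition (integration against the kernel element weighted by $a^3$, $A^3$ and a boundary term) and then verifies the condition by an explicit integration by parts yielding a strictly negative quantity. Second, the existence, for \emph{every} $k\ge1$, of a bifurcation value $\lambda_k$ with kernel spanned by the single mode $\cos(knq)$ requires knowing that the curve $\mu(\lambda)$ of zeros of the dispersion function is well defined, strictly increasing and unbounded; in the paper this rests on $\Xi_\lambda>0$ and $\Xi_\mu<0$ at zeros and on the asymptotics $\Xi(\lambda,\mu)\to-\infty$ as $\mu\to\infty$, the latter being a delicate comparison argument with constant-coefficient problems that your proposal does not address.
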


\begin{rem}\label{R:0}
The integer $n$ in Theorem \ref{MT} may be chosen to be $n=1$ provided that the condition \eqref{d2} is satisfied.
\end{rem}

Though our result is true for any arbitrary finite number $N\geq2$, any finite 
sequence $p_0<p_1<\ldots <p_N=0$, and any tuple $(\gamma_1,\ldots,\gamma_N)\in\R^n$, we will prove
first the result when $N=2$ and $(\gamma_1,\gamma_2)\in\R^2$ satisfies $\gamma_1\neq\gamma_2.$
The corresponding result for $\gamma_1=\gamma_2,$ has been already established in 
\cite{W06b} in the context of  waves which satisfy also condition \eqref{PBC}, respectively in \cite{CM12x} 
for waves that possess stagnation points.
For a characterization of the local branches obtained  in these references we refer to \cite{M13x}.
 The proof of Theorem \ref{MT}, with $N\geq3$,    will be discussed after proving the corresponding statement for $N=2$.

 Before of that, let us reconsider the problems \eqref{eq:P} and \eqref{eq:psi} and interpret our solutions in the light of these formulations.
It is well-known that the  streamlines of the flow coincide with the level curves of the stream function $\psi$ and that they are parametrized by the mappings $x\mapsto h(x,p)-d$,
 $p_0\leq p\leq 0$
 (the free wave surface corresponding to $p=0$), cf. \cite{CoSt04}.
 Using this property, we have the following result.
 
 \begin{cor}\label{L:EF}
 Given a solution $h$ of the problem \eqref{PB}-\eqref{PBC} as found  in Theorem \ref{MT}, we define $\eta_i(x):=h(x,p_i)-d$, $0\leq i\leq N,$ and   
 \[
 \0_{\eta}^i:=\{(x,y)\,:\,\eta_{i-1}(x)<y<\eta_{i}(x) \}\qquad  \text{ for $1\leq i\leq N$}.
 \]
 Then, we have   $\eta_i\in C^{2+\alpha}(\s)$ for $1\leq i\leq N-1.$
The other streamlines  are parametrized by smooth functions $h(\cdot,p)-d)$, with $p\in[p_0,0]\setminus\{p_i\,:\, 1\leq i\leq N-1\}$, the wave surface 
being the graph $y=\eta(x):=\eta_N(x)$.
 Furthermore,  the stream function $\psi$ satisfies 
  \begin{align}\label{PG0}
 \psi\in C^{2-}(\ov\0_\eta)\cap C^{2+\alpha}(\ov{\0_{\eta}^i})\cap C^\infty(\0_{\eta}^i)\qquad\text{for all $1\leq i\leq N$},
  \end{align}
 and  we have
 \begin{align}\label{PG}
 {\bf u},{\bf v},{\bf P} \in C^{1-}(\ov\0_\eta)\cap C^{1+\alpha}(\ov{\0_{\eta}^i})\cap C^\infty(\0_{\eta}^i)\qquad\text{for all $1\leq i\leq N$}.
 \end{align}
 \end{cor}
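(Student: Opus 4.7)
The plan is to transfer the regularity of $h$ from Theorem~\ref{MT} back to the physical variables through the Dubreil-Jacotin diffeomorphism $\Phi$, exploiting the strict positivity \eqref{PBC} of $h_p$ which ensures $\Phi^{-1}$ exists and inherits the Hölder regularity of $h$ on each closed strip.

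First I would handle the streamlines. Since the streamline at level $\psi = -p$ is parametrized by $x \mapsto (x, h(x,p) - d)$, the interface $\eta_i = h(\cdot, p_i) - d$ is simply the trace of $h$ on $\s \times \{p_i\}$. By \eqref{E:R}, $h \in C^{2+\alpha}(\s \times [p_{i-1}, p_i])$ for each $i$, which is preserved on the trace, giving $\eta_i \in C^{2+\alpha}(\s)$ for $1 \leq i \leq N-1$. For $p \in (p_{i-1}, p_i)$, the interior smoothness $h \in C^\infty(\s \times (p_{i-1}, p_i))$ yields $h(\cdot, p) \in C^\infty(\s)$, and the wave surface $y = \eta_N(x)$ is smooth because $h(\cdot, 0) \in C^\infty(\s)$.

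Next I would recover $\psi$ from $h$. Because $h_p > 0$, the map $\Phi^{-1}(q,p) := (q, h(q,p) - d)$ is a bijection from $\ov\0$ onto $\ov\0_\eta$ which restricts to a diffeomorphism from $\s \times [p_{i-1}, p_i]$ onto $\ov{\0_\eta^i}$ of the same regularity class as $h$ on that strip. Writing $\psi(x,y) = -p$ where $p$ is determined by $h(x,p) = y+d$ and applying the implicit function theorem in Hölder spaces (with $h_p > 0$ providing the non-degeneracy needed on each closed strip), I obtain $\psi \in C^{2+\alpha}(\ov{\0_\eta^i})$ and $\psi \in C^\infty(\0_\eta^i)$; the global $C^{2-}(\ov\0_\eta)$ regularity follows from the corresponding global $C^{2-}(\ov\0)$ regularity of $h$, yielding \eqref{PG0}.

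Finally, for \eqref{PG} the velocities are read off directly from $\nabla\psi = (-\mathbf{v}, \mathbf{u} - c)$, so $\mathbf{u}$ and $\mathbf{v}$ lose exactly one derivative relative to $\psi$ and lie in $C^{1-}(\ov\0_\eta) \cap C^{1+\alpha}(\ov{\0_\eta^i}) \cap C^\infty(\0_\eta^i)$. For $\mathbf{P}$, I would use the first two equations of \eqref{eq:Euler}, which express $\nabla \mathbf{P}$ as a polynomial in $(\mathbf{u}-c, \mathbf{v})$ and their first-order derivatives plus the constant gravity term; this combination sits in the same class as $\mathbf{u}, \mathbf{v}$, and integrating it places $\mathbf{P}$ in that same class. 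The main delicate point is the bookkeeping across the interfaces $y = \eta_i(x)$: one must verify that $\Phi^{-1}$ actually carries the strip decomposition of $\ov\0$ into $\bigcup_i \ov{\0_\eta^i}$ with preserved Hölder regularity up to each interface, so that the chain rule and the implicit function theorem can be invoked separately on each closed strip. Once this is done, no further PDE argument beyond Theorem~\ref{MT} is required.
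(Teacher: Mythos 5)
Your argument is correct and follows essentially the same route as the paper: the regularity of $\psi$ is transferred from $h$ through the relation $h(x,-\psi(x,y))=y+d$ using $h_p>0$ (the paper phrases this as the initial value problem $\psi_y=-1/h_p(x,-\psi)$, $\psi(x,\eta(x))=0$, rather than the implicit function theorem, but the content is the same). The only cosmetic difference is at the pressure step, where the paper reads $\mathbf{P}$ off algebraically from Bernoulli's law $((c-\mathbf{u})^2+\mathbf{v}^2)/2+gy+\mathbf{P}+\Gamma(-\psi)=\text{constant}$ instead of integrating the Euler equations, which avoids the (harmless) exactness and integration bookkeeping in your last step.
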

\begin{proof} 
The stream function $\psi$ and the height function $h$ are coupled by the following relation: given $x\in\s,$ the function $\psi(x,\cdot)$
is the solution of the initial value problem 
\begin{align*}
\left\{
\begin{array}{rllll}
 \psi_y(x,y)&=&\displaystyle-\frac{1}{h_p(x,-\psi(x,y))}& \qquad\text{for  $y\leq\eta(x)$},\\
 \psi(x,\eta(x))&=&0,
 \end{array}
 \right.
\end{align*}
cf. \cite{Con11}.
This proves \eqref{PG0}.
The property  \eqref{PG} follows from \eqref{PG0}  and from  Bernoulli's law 
\[
\frac{(c-{\bf u})^2+{\bf v}^2}{2}+gy+{\bf P}+\Gamma(-\psi)=const.\qquad\text{in $\ov\0_\eta.$}
\]
\end{proof}

\section{The associated diffraction problem}\label{Sec:3}

In the following we choose $N=2$, $p_0<p_1<0$, $(\gamma_1, \gamma_2)\in\R^2$ with $\gamma_1\neq\gamma_2$,  and define  the vorticity function $\gamma$  by \eqref{E:GA}.
In order to prove our main result Theorem \ref{MT}, we associate to \eqref{PB} the following diffraction (or transmission) problem
\begin{equation}\label{DP}
\left\{
\begin{array}{rllll}
(1+u_q^2)u_{pp}-2u_pu_qu_{pq}+u_p^2u_{qq}-\gamma_1u_p^3&=&0&\text{in $\0_1$},\\
(1+U_q^2)U_{pp}-2U_pU_qU_{pq}+U_p^2U_{qq}-\gamma_2U_p^3&=&0&\text{in $\0_2$},\\
\displaystyle 1+U_q^2+(2gU-Q)U_p^2-2\sigma \frac{U_p^2U_{qq}}{(1+U_q^2)^{3/2}}&=&0&\text{on $p=0$},\\
u&=&U&\text{on $p=p_1$},\\
u_p&=&U_p&\text{on $p=p_1$},\\
u&=&0&\text{on $p=p_0$},
\end{array}
\right.
\end{equation}
where we used the notation $\0_1:=\s\times (p_{0},p_{1})$ and $\0_2:=\s\times (p_{1},0).$
The reason for defining \eqref{DP} is twofold.
First of all, the nonlinear boundary condition on $p=0$ contains  second order derivatives
of the unknown, so that we cannot  consider a weak formulation of \eqref{PB} in a similar manner as in \cite{CS11}. 
Secondly, using \eqref{PB} we can still work in a classical  H\"older setting and incorporate arbitrarily many jumps of the vorticity function into the problem.
This is due to the fact that each solution of \eqref{DP}  defines a solution of \eqref{PB}.

\begin{lemma}\label{L:1} Assume that $(u,U)\in C^{2+\alpha}(\ov\0_1)\times C^{2+\alpha}(\ov\0_2)$ is a solution of \eqref{DP}. 
Then, the  function $h:\ov \0\to\R$ defined by
\begin{equation}\label{SOL}
 h:=\left\{
 \begin{array}{ll}
  u&\text{in $\ov\0_1$,}\\
  U&\text{in $\ov\0_2$}
 \end{array}
 \right.
\end{equation}
belongs to  $C^{2-}(\ov\0)$ and solves the last two equations of  \eqref{PB} pointwise and the first equation of \eqref{PB}  in $\0_1\cup\0_2.$ 
Moreover,  if $h$ satisfies \eqref{PBC},
then we additionally have
$h\in C^{\infty}(\0_i),$ $i=1,2,$ and $h(\cdot,0)\in C^\infty(\s)$.
\end{lemma}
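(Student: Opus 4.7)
By construction $h$ coincides with $u$ on $\ov\0_1$ and with $U$ on $\ov\0_2$. The transmission condition $u=U$ on $\s\times\{p_1\}$ makes $h$ continuous on $\ov\0$ and forces the tangential derivative $h_q$ to match across the interface; the condition $u_p=U_p$ makes the normal derivative $h_p$ continuous as well. Since $u,U\in C^{2+\alpha}$ up to the interface, both $h_p$ and $h_q$ are globally Lipschitz on $\ov\0$, so $h\in C^{2-}(\ov\0)$. The third and last equations of \eqref{DP} coincide pointwise with the corresponding boundary conditions of \eqref{PB}, and since $\gamma\equiv\gamma_i$ on $\0_i$, the first two equations of \eqref{DP} together assert that the first equation of \eqref{PB} holds on $\0_1\cup\0_2$.

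\textbf{Interior smoothness under \eqref{PBC}.} If $h_p>0$, the discriminant of the principal part of the quasilinear operator $(1+h_q^2)h_{pp}-2h_p h_q h_{pq}+h_p^2 h_{qq}$ is $(h_p h_q)^2-(1+h_q^2)h_p^2=-h_p^2<0$, so the equation is uniformly elliptic on each $\ov\0_i$. The coefficients and the lower order term depend polynomially on $h_p,h_q$, so a standard interior Schauder bootstrap (differentiating the equation in $q$ and $p$ and iterating) yields $h\in C^{\infty}(\0_i)$ for $i=1,2$.

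\textbf{Trace smoothness on $p=0$.} Near $p=0$ the function $h$ coincides with $U$, which satisfies both the quasilinear elliptic equation and the quasilinear boundary condition of \eqref{DP}. Using \eqref{PBC} and $\sigma>0$, the top boundary condition can be solved for the tangential second derivative
\[
h_{qq}(q,0)=\frac{(1+h_q^2)^{3/2}\bigl(1+h_q^2+(2gh-Q)h_p^2\bigr)}{2\sigma h_p^2},
\]
a smooth (rational) function of $h(\cdot,0),h_q(\cdot,0)$, and the trace of $h_p$ on $p=0$. Combining this identity with boundary Schauder estimates for $U$ on strips $\s\times[p_1+\delta,0]$ (so as to stay away from the interface $p=p_1$) and bootstrapping in the standard way promotes the initial $C^{2+\alpha}$ regularity to $C^{k+\alpha}$ for every $k\in\N$, giving $h(\cdot,0)\in C^{\infty}(\s)$. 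The main obstacle is making this last bootstrap rigorous: one must verify that the linearisation of the second order quasilinear boundary condition satisfies the complementing (Lopatinskii--Shapiro) condition relative to the linearised interior equation, so that the up-to-the-boundary Schauder theory applies at each step; once this is in place, the iteration is routine.
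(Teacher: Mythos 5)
Your argument is correct and follows the same skeleton as the paper: continuity of $h,h_q,h_p$ across $p=p_1$ from the transmission conditions gives $h\in C^{2-}(\ov\0)$, the equations of \eqref{DP} transcribe into those of \eqref{PB} on $\0_1\cup\0_2$ and on the two horizontal boundaries, and \eqref{PBC} yields uniform ellipticity and hence interior smoothness by the standard Schauder bootstrap (the paper cites \cite{GT01} for exactly this). The only divergence is the smoothness of the trace $h(\cdot,0)$. The paper disposes of it in one line by invoking Theorem 7.3 of \cite{ADN54} for the quasilinear problem with the second-order (Venttsel-type) top boundary condition, the required complementing condition for its linearization being taken from \cite{HM}. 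You instead solve the boundary condition for $h_{qq}(q,0)$ (legitimate since $\sigma>0$ and $h_p>0$ on $p=0$) and propose a strip-wise bootstrap away from $p=p_1$; note that if you carry this through as a \emph{Dirichlet} bootstrap — the trace identity upgrades $h(\cdot,0)$ from $C^{k+\alpha}$-type data by one derivative because its right-hand side involves only $h,h_q,h_p$ on $p=0$, and then global Schauder theory for the linear Dirichlet problem on $\s\times[p_1+\delta,0]$ (with coefficients frozen at $h$ and smooth data on the artificial lower boundary) returns that extra derivative to $h$ up to $p=0$ — then no Lopatinskii--Shapiro verification is needed at all, since Dirichlet data always satisfy the complementing condition. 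So the ``main obstacle'' you flag at the end is an obstacle only for the route the paper takes (where it is settled by citation), not for the bootstrap you actually sketched; as written your proof conflates the two and leaves that step open, but either completing the Dirichlet iteration or citing \cite{ADN54} and \cite{HM} as the paper does closes it.
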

\begin{proof}
Because of the transmission conditions (equations three and four) of \eqref{DP}, we  easily see that the first order derivatives of $h$ are Lipschitz continuous, that is $h\in C^{2-}(\ov\0_1)$.
Clearly, $h$ solves also the last two equations of  \eqref{PB}  and the first one  in $\0_1\cup\0_2.$
To finish the proof, we note that condition \eqref{PBC} ensures that the first two equations of \eqref{DP} are  uniformly elliptic.
Then, by elliptic regularity, cf. \cite{GT01}, we obtain that $h\in C^{\infty}(\0_i),$ $i=1,2.$
 The property that the wave surface  is smooth, or equivalently that  $h(\cdot,0)\in C^\infty(\s)$, follows  from   Theorem 7.3 in   \cite{ADN54},
by using the uniform ellipticity of the first  equation of \eqref{PB} together with the fact that  the linearization of the boundary condition of \eqref{PB} on $p=0$
 satisfies the complementing condition, see e.g. \cite{HM}.
\end{proof}

In the remainder of this paper we shall seek solutions of \eqref{DP} that satisfy \eqref{PBC}.
According to Lemma \ref{L:1}, they are the solutions described in Theorem \ref{MT}.
To this end, we recast the problem \eqref{DP} as an abstract bifurcation problem, and use then  bifurcation tools to prove the existence of local bifurcation
curves consisting of solutions of \eqref{DP} and \eqref{PBC}.  \smallskip

\paragraph{\bf Laminar flow solutions}
We introduce now an additional parameter $\lambda$ into the problem \eqref{DP} which is   used to parametrize the trivial solutions of \eqref{DP}.
These solutions describe water waves with a flat surface and parallel streamlines, and we call them laminar flow solutions of \eqref{DP}.
The head $Q$ corresponding to a laminar flow solution  also depends  on $\lambda$ and therefore we are left with    only one free parameter in   \eqref{DP}. 

We shall denote the laminar flow solutions by $(\uu,\zu).$
Assuming that $(\uu,\zu)$ depends only on the variable $p$,   we see that $(\uu,\zu)$ solves \eqref{DP} and \eqref{PBC} if  and only if it is a solution of the system
\begin{equation}\label{DPL}
\left\{
\begin{array}{rllll}
\uu''&=&\gamma_1\uu'^3&\text{in $p_0<p<p_1$},\\
\zu''&=&\gamma_2\zu'^3&\text{in $p_1<p<0$},\\
\displaystyle 1+(2g\zu(0)-Q)\zu'^2(0)&=&0,\\
\uu(p_1)&=&\zu(p_1),\\
\uu'(p_1)&=&\zu'(p_1),\\
\uu(p_0)&=&0.
\end{array}
\right.
\end{equation}
Whence, there exists $\lambda>2\max_{[p_0,0]}\Gamma $ such that we have
\begin{equation}\label{E:LFS}
\begin{aligned}
\uu(p)&:=\uu(p;\lambda):=\int_{p_0}^p\frac{1}{\sqrt{\lambda-2\Gamma(s)}}\, ds,\qquad\text{ $p\in[p_0,p_1]$},\\[1ex]
\zu(p)&:=\zu(p;\lambda):=\int_{p_0}^p\frac{1}{\sqrt{\lambda-2\Gamma(s)}}\, ds,\qquad\text{ $p\in[p_1,0]$}.
\end{aligned}
\end{equation}
We observe that   $(\uu,\zu)\in C^\infty([p_0,p_1])\times C^\infty([p_1,0]),$  and that $(\uu,\zu)$  verify the system \eqref{DPL} exactly when
\begin{equation}\label{Q}
Q=Q(\lambda):=\lambda+2g\int_{p_0}^0\frac{1}{\sqrt{\lambda-2\Gamma(p)}}\, dp.
\end{equation}
Let us also observe that the constant $\lambda$ is related to the speed at the top of the laminar flow.
Namely, we have that
\[\sqrt{\lambda}=\frac{1}{\zu_p(0)}=(c-{\bf u})\big|_{y=\eta(x)}.\]

\paragraph{\bf The functional analytic setting}
We define now an abstract  functional analytic setting which allows us to recast  the problem \eqref{DP} as a bifurcation problem.
Therefore, we choose $\alpha\in(0,1)$ and define the Banach spaces:
\begin{align*}
 X&:=\left\{(v,V)\in C^{2+\alpha}(\ov\0_1)\times C^{2+\alpha}(\ov\0_2)\,:\, \text{$v =V $, $v_p =V_p$ on $ p=p_1$,   and $v\big|_{p=p_0}=0$}\right\},\\
 Y&:=C^{\alpha}(\ov\0_1)\times C^{\alpha}(\ov\0_2),\qquad Z:=C^{\alpha}(\s),
\end{align*}
whereby we have identified, when defining $Z$, the unit circle $\s$  with $p=0.$ 
Moreover, we introduce the operator $(\cF,\cG):(2\max_{[p_0,0]}\Gamma,\infty)\times X\to Y\times Z$, with $\cF:=(\cF_1,\cF_2)$, by the following expressions:
\begin{align*}
 \cF_1(\lambda,v):=&(1+v_q^2)(v_{pp}+\uu'')-2(v_p+\uu')v_qv_{pq}+(v_p+\uu')^2v_{qq}-\gamma_1(v_p+\uu')^3,\\
 \cF_2(\lambda,V):=&(1+V_q^2)(V_{pp}+\zu'')-2(V_p+\zu')V_qV_{pq}+(V_p+\zu')^2V_{qq}-\gamma_2(V_p+\zu')^3,\\
  \cG(\lambda,V):=&\displaystyle 1+V_q^2+(2g(V+\zu)-Q(\lambda))(V_p+\zu')^2-2\sigma \frac{(V_p+\zu')^2V_{qq}}{(1+V_q^2)^{3/2}},
\end{align*}
with $Q(\lambda)$   given by \eqref{Q}.
Let us observe that the function $(\cF,\cG)$ is well-defined and that it depends real-analytically on its arguments, that is 
\begin{align}\label{BP0}
 (\cF,\cG)\in C^\omega((2\max_{[p_0,0]}\Gamma,\infty)\times X, Y\times Z).
\end{align}
With this notation, the problem \eqref{DP} is equivalent to the following abstract operator equation
\begin{align}\label{BP}
 (\cF,\cG)(\lambda,(v,V))=0\qquad\text{in $Y\times Z$.}
\end{align}
Since when $(v,V)=0$ the problem \eqref{BP} is equivalent to the system \eqref{DPL}, we have that 
\begin{align}\label{BP1}
 (\cF,\cG)(\lambda,(0,0))=0\qquad\text{for all $\lambda\in(2\max_{[p_0,0]}\Gamma,\infty).$}
\end{align}
Moreover, if $(\lambda,(v,V))$ is a solution of \eqref{BP}, then the function $(u,U):=(\uu+v,\zu+V)$ solves the diffraction problem \eqref{DP} when $Q=Q(\lambda)$ and, according to Lemma \ref{L:1},  it
defines a solution $h$ of the water wave problem \eqref{PB}. 
We remark that  if $(v,V)$ are sufficiently small, the associated solution $h$ of \eqref{PB} satisfies also the condition \eqref{PBC}. \smallskip

\paragraph{\bf The Fredholm property for the linearized operator}
Our main tool in determining non-laminar solutions of  problem \eqref{BP} is the theorem on bifurcations from simple eigenvalues due to Crandall and Rabinowitz, cf. \cite{CR71}.
\begin{thm}\label{CR}
Let $\mathbb{X},\mathbb{Y}$ be real Banach spaces, $I\subset\R$ an  open interval,  and  let the mapping $H\in C^\omega(I \times \mathbb{X},\mathbb{Y})$ satisfy:
\begin{enumerate}
\item[(a)] \label{cr1} $H(\lambda,0)=0$ for all $\lambda\in I$;
\item[(b)] \label{cr2} There exists $\lambda_*\in I$ such that Fr\'echet derivative $\p_x H(\lambda_*,0)$ is a Fredholm operator of index zero with a one-dimensional kernel and 
\[\ke \p_x H(\lambda_*,0)=\{sx_0: s\in\mathbb R, 0\neq x_0\in \mathbb{X}\};\]
\item[(c)] \label{cr3}
 The tranversality condition holds
\[
\p_{\lambda x}H(\lambda_*,0)x_0\not \in \im(\p_x H(\lambda_*,0)).
\]
\end{enumerate}
Then, $(\lambda_*,0)$ is a bifurcation point in the sense that there exists $\epsilon>0$ and a real-analytic curve $(\lambda,x):(-\e,\e)\to I\times \mathbb{X}$  consisting only of solutions of the equation $H(\lambda,x)=0$. 
Moreover, as $s\to 0,$ we have that
\[
\lambda(s)=\lambda_*+O(s) \qquad\text{and}\qquad x(s)=s\chi(s) +O(s^2).
\]
Furthermore there exists an open set $U\subset I\times \mathbb{X}$ with $(\lambda_*,0)\in U$ and 
\[
\{
(\lambda,x)\in U :H(\lambda,x)=0, x\neq 0\}=\{(\lambda(s),x(s)): 0<|s|<\epsilon
\}.
\]
\end{thm}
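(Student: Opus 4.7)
The plan is to apply the classical Lyapunov--Schmidt reduction, splitting the problem into an infinite-dimensional piece solvable by the analytic implicit function theorem and a scalar ``bifurcation equation'' whose solvability is forced by the transversality condition.

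First I would set $L:=\p_x H(\lambda_*,0)$ and fix topological complements adapted to the Fredholm structure: write $\mathbb{X}=\R x_0\oplus\mathbb{X}_1$ and $\mathbb{Y}=\im L\oplus\mathbb{Y}_2$ with $\dim\mathbb{Y}_2=1$, which is possible because $L$ has index zero with a one-dimensional kernel spanned by $x_0$. Let $P$ and $Q=\id-P$ denote the continuous projections onto $\im L$ and $\mathbb{Y}_2$, respectively. Parametrizing $x=tx_0+w$ with $t\in\R$ and $w\in\mathbb{X}_1$, the equation $H(\lambda,x)=0$ becomes the coupled system $PH(\lambda,tx_0+w)=0$ and $QH(\lambda,tx_0+w)=0$.

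Next I would solve the $P$-equation by the analytic implicit function theorem: its derivative with respect to $w$ at $(\lambda_*,0,0)$ is the restriction $PL|_{\mathbb{X}_1}:\mathbb{X}_1\to\im L$, which is a bijection by construction. Since $H$ is real-analytic, this yields a unique real-analytic solution $w=w(\lambda,t)$ near $(\lambda_*,0)$; hypothesis (a) forces $w(\lambda,0)=0$, so we may factor $w(\lambda,t)=t\phi(\lambda,t)$ with $\phi$ real-analytic and $\phi(\lambda_*,0)=0$. Using $H(\lambda,0)=0$ together with the fundamental theorem of calculus, the remaining $Q$-equation can be written as $t\,\Psi(\lambda,t)=0$ where
\[
\Psi(\lambda,t):=Q\int_0^1 \p_x H\bigl(\lambda,st(x_0+\phi(\lambda,t))\bigr)\bigl[x_0+\phi(\lambda,t)\bigr]\,ds.
\]
For $t\neq 0$ this is equivalent to $\Psi(\lambda,t)=0$.

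Finally I would apply the one-dimensional implicit function theorem to $\Psi$. Since $Lx_0=0$ and $\phi(\lambda_*,0)=0$, one has $\Psi(\lambda_*,0)=QLx_0=0$, while differentiating under the integral yields $\p_\lambda\Psi(\lambda_*,0)=Q\,\p_{\lambda x}H(\lambda_*,0)x_0$. By the transversality condition (c), this element is nonzero in $\mathbb{Y}_2\cong\R$, so the scalar implicit function theorem produces a real-analytic curve $\lambda=\lambda(t)$ with $\lambda(0)=\lambda_*$; setting $x(t):=t(x_0+\phi(\lambda(t),t))$ gives the claimed bifurcation curve, and the asymptotic expansions $\lambda(s)=\lambda_*+O(s)$, $x(s)=sx_0+O(s^2)$ follow from $\phi(\lambda_*,0)=0$. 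The main technical point is the division by $t$: one must retain real-analyticity jointly in $(\lambda,t)$ after the factorization, which is why the integral representation above is preferred to a naive quotient. The uniqueness assertion then follows by tracing the construction backward, since any nontrivial zero $(\lambda,x)$ close to $(\lambda_*,0)$ admits a unique decomposition $x=tx_0+w(\lambda,t)$ with $t\neq 0$, and hence must satisfy $\Psi(\lambda,t)=0$, forcing $\lambda=\lambda(t)$.
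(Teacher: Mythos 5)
Your argument is essentially correct: it is the classical Lyapunov--Schmidt reduction that constitutes the original proof of the Crandall--Rabinowitz theorem. The paper itself does not prove this statement at all; Theorem \ref{CR} is quoted as a known tool with a reference to \cite{CR71}, so there is no internal proof to compare against, and your reconstruction coincides with the standard one in that reference. Two small points you leave implicit but which are needed for the computation of $\p_\lambda\Psi(\lambda_*,0)$: differentiating under the integral also produces the term $Q\,\p_xH(\lambda_*,0)\p_\lambda\phi(\lambda_*,0)$, which vanishes precisely because it lies in $\im L$ and $Q$ projects along $\im L$; and the factorization $w(\lambda,t)=t\phi(\lambda,t)$ with $\phi$ jointly real-analytic should be justified, e.g.\ by $\phi(\lambda,t)=\int_0^1\p_t w(\lambda,st)\,ds$, together with the verification $\phi(\lambda_*,0)=\p_t w(\lambda_*,0)=0$, which follows since $L\p_t w(\lambda_*,0)=0$ and $\p_t w(\lambda_*,0)\in\mathbb{X}_1$ while $L|_{\mathbb{X}_1}$ is injective. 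Likewise, in the uniqueness step the case $t=0$ should be dispatched explicitly: if $t=0$ then the unique solvability of the $P$-equation gives $w=w(\lambda,0)=0$, hence $x=0$, so nontrivial zeros indeed have $t\neq0$. With these remarks your proof is complete and matches the cited argument.
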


In order to use Theorem \ref{CR} in the context of the  bifurcation problem \eqref{BP0}-\eqref{BP1},
we need to determine particular $\lambda$ for which  $\p_{(v,V)}(\cF,\cG)(\lambda,0)\in\kL(X,Y\times Z)$ is a Fredholm operator of index zero with a one-dimensional kernel. 
We prove first  that  $\p_{(v,V)}(\cF,\cG)(\lambda,0)$ is a Fredholm operator of index zero for every value of $\lambda\in(2\max_{[p_0,0]}\Gamma,\infty).$ 
The other hypothesis Theorem \ref{CR} are achieved later on by choosing appropriate values for $\lambda$ and by restricting the operator $(\cF,\cG)$
to certain subspaces of $X$ and $Y\times Z.$

It is not difficult to see that the Fr\' echet derivative $\p_{(v,V)}(\cF,\cG)(\lambda,0)$ is the linear operator $(L,T)\in\kL(X,Y\times Z)$, with $L:=(L_1,L_2),$ given by
\begin{equation}\label{L1}
\begin{aligned}
 L_1v:=& v_{pp} +\uu'^2v_{qq}-3\gamma_1\uu'^2v_p,\\
 L_2V:=&V_{pp}+\zu'V_{qq}-3\gamma_2\zu'^2V_p,\\
 TV:=&  2\left[(2g \zu -Q(\lambda))\zu'V_p+g\zu'^2V-\sigma \zu'^2V_{qq}\right]\big|_{p=0},
\end{aligned}
\qquad\text{for $(v,V)\in X$}.
\end{equation}
Showing that $(L,T)$ is a Fredholm operator does not follow from the existing theory on diffraction problems, cf. \cite{LU68}. 
This is due to the fact that when solving the linear diffraction problem $(L,T)(v,V)=((f,F), \varphi)\in Y\times Z,$
there are, as far as we know,  no results that guarantee that $(v,V)$ belong to $X$.
The problem is generated by the transmission conditions on $p=p_1$ which can be used to only show that $v\in C^{2+\alpha}(\s\times [p_0,p_1))\cap C^{1+\alpha}(\ov\0_1)$
and $V\in C^{2+\alpha}(\s\times (p_1,p_0])\cap C^{1+\alpha}(\ov\0_2).$
We will overcome this difficulty by using an  approach based on the elliptic  theory  for linear boundary value problems with Venttsel boundary condition \cite{LT91} 
together with a Fourier multiplier theorem for operators on H\"older spaces of periodic functions \cite{EM09, JL12}.

Before proceeding, we observe that $T$ can be re-expressed by the formula 
\begin{equation}\label{L2}
TV=\frac{2}{\lambda}\left[gV-\lambda^{3/2}V_p- \sigma V_{qq}\right]\big|_{p=0}.
\end{equation}

\begin{thm}\label{T:1}
Assume that $(\gamma_1,\gamma_2)\in\R^2$ and that $\gamma_1\neq\gamma_2.$
Then, for every constant $\lambda\in(2\max_{[p_0,0]}\Gamma,\infty),$    the Fr\' echet derivative  $\p_{(v,V)}(\cF,\cG)(\lambda,0)\in\kL(X,Y\times Z)$ is a Fredholm operator of index zero.
\end{thm}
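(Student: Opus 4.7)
The plan is to reduce the Fredholm statement for $(L,T)\colon X\to Y\times Z$ to a Fredholm statement for a scalar equation on the one-dimensional circle $\s$, and then analyze the latter via a Fourier multiplier argument. The obstruction already emphasized by the authors is that, although the linear diffraction problem $(L,T)(v,V)=((f,F),\varphi)$ is uniformly elliptic in each subdomain $\0_i$, classical transmission theory \cite{LU68} only yields $C^{1+\alpha}$ regularity across the interface $\s\times\{p_1\}$, whereas the target space $X$ requires $C^{2+\alpha}$ on each side. Thus one cannot apply Riesz--Schauder to the diffraction operator directly, and the interface must be bypassed.

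For each prescribed trace $g\in C^{2+\alpha}(\s)$ and each datum $((f,F),\varphi)\in Y\times Z$, I introduce two decoupled auxiliary problems. On $\0_1$, the uniformly elliptic equation $L_1 v=f$ with Dirichlet data $v|_{p=p_0}=0$ and $v|_{p=p_1}=g$ admits, by classical Schauder theory, a unique $v\in C^{2+\alpha}(\overline{\0}_1)$ depending continuously on $(g,f)$; set $\Lambda_1 g:=v_p|_{p=p_1}$, moving the data-dependent part to the right-hand side. On $\0_2$, the problem $L_2 V=F$ in $\0_2$, $V|_{p=p_1}=g$, $TV=\varphi$ on $p=0$ combines a uniformly elliptic equation with a Venttsel-type boundary condition, because $T$ contains the second tangential derivative $V_{qq}$. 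Applying the theory of \cite{LT91} for mixed Dirichlet--Venttsel problems yields a unique $V\in C^{2+\alpha}(\overline{\0}_2)$ and the bounded interface-trace operator $\Lambda_2 g:=V_p|_{p=p_1}\in C^{1+\alpha}(\s)$. Parametrizing any prospective solution $(v,V)\in X$ by the common interface value $g$ then recasts the equation $(L,T)(v,V)=((f,F),\varphi)$ as the single scalar equation $(\Lambda_1-\Lambda_2)g=\Phi$ on $\s$, with $\Phi\in C^{1+\alpha}(\s)$ depending continuously on the data.

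It remains to prove that $A:=\Lambda_1-\Lambda_2\colon C^{2+\alpha}(\s)\to C^{1+\alpha}(\s)$ is Fredholm of index zero. Since the coefficients of $L_1,L_2,T$ depend only on $p$, a Fourier series expansion in $q$ diagonalizes each $\Lambda_i$ into a Fourier multiplier whose mode-$k$ symbol is obtained by solving an explicit two-point linear ODE. An asymptotic analysis for $|k|\to\infty$, combined with the laminar-flow matching identity $\uu'(p_1)=\zu'(p_1)$, produces symbols of the form $\hat\Lambda_1(k)=|k|\uu'(p_1)+O(1)$ and $\hat\Lambda_2(k)=-|k|\uu'(p_1)+O(1)$; the opposite signs arise because $p_1$ is the upper boundary of $\0_1$ but the lower one of $\0_2$. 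Hence $A$ has principal Fourier symbol $2|k|\uu'(p_1)$, which, by the Fourier multiplier theorem on periodic H\"older spaces \cite{EM09,JL12}, defines a bounded operator $C^{2+\alpha}(\s)\to C^{1+\alpha}(\s)$ that is Fredholm of index zero (the kernel is spanned by the constants and the range is the codimension-one mean-zero subspace). The remainder $A$ minus this principal multiplier is a zeroth-order multiplier together with variable-coefficient corrections; both factor through the compact embedding $C^{2+\alpha}(\s)\hookrightarrow C^{1+\alpha}(\s)$ and therefore define a compact operator, leaving the Fredholm index unchanged.

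The main obstacle will be the symbolic analysis: deriving the frequency asymptotics of $\Lambda_1$ and $\Lambda_2$ in sufficient detail to isolate the leading multiplier $2\uu'(p_1)|k|$ and to control the full remainder as a compact perturbation on the chosen H\"older scale, within the framework of the multiplier theorem of \cite{EM09,JL12}. A secondary technical point is verifying that the mixed Dirichlet--Venttsel boundary value problem in $\0_2$ falls precisely within the $C^{2+\alpha}$-solvability theory of \cite{LT91}, with estimates uniform in $g$.
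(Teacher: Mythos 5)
Your overall strategy (reduce to the interface $p=p_1$ via Dirichlet-to-Neumann operators and read off Fredholmness from a first-order Fourier multiplier plus a compact remainder) is close in spirit to the paper's proof: the symbol $\lambda_k$ in \eqref{Symb} is, up to the factor $a^2(p_1)$, exactly the inverse of the symbol of your operator $\Lambda_1-\Lambda_2$, whose leading part $2|k|/a(p_1)=2\uu'(p_1)|k|$ you identify correctly. The genuine gap is in the definition of $\Lambda_2$. You solve in $\0_2$ the problem $L_2V=F$, $V=g$ on $p=p_1$, $TV=\varphi$ on $p=0$ with the \emph{full} boundary operator $T$ and claim unique $C^{2+\alpha}$ solvability from \cite{LT91}. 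By \eqref{L2}, $\tfrac{\lambda}{2}TV=\bigl[gV-\lambda^{3/2}V_p-\sigma V_{qq}\bigr]\big|_{p=0}$, so the outward normal derivative enters with the sign opposite to the zeroth-order and tangential-diffusion terms; the structural hypotheses of the Venttsel theory fail, and so does the maximum principle (at a positive boundary maximum $V>0$, $V_{qq}\le 0$ and, by Hopf's lemma, $V_p>0$, so $gV-\lambda^{3/2}V_p-\sigma V_{qq}$ has no sign). Worse, uniqueness actually fails for certain $\lambda$: for fixed $k$ the mode problem $(A^3V_k')'-k^2AV_k=0$ on $(p_1,0)$, $V_k(p_1)=0$, $(g+\sigma k^2)V_k(0)=\lambda^{3/2}V_k'(0)$ admits nontrivial solutions along curves in the $(\lambda,k)$-plane (normalize $V_k'(p_1)=1$ and compare how the two sides of the top condition vary with $\lambda$). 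At such values your $\Lambda_2$ is simply not defined, whereas the theorem asserts Fredholmness for \emph{every} $\lambda\in(2\max_{[p_0,0]}\Gamma,\infty)$ --- and these are precisely the interesting values, since the whole point of the construction is that $(L,T)$ acquires a kernel.

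The missing step is the paper's first move: since $V\mapsto\bigl[(g-\sigma)V-\lambda^{3/2}V_p\bigr]\big|_{p=0}$ is compact from $X$ to $Z$ (traces land in $C^{2+\alpha}(\s)$ and $C^{1+\alpha}(\s)$, compactly embedded in $C^\alpha(\s)$), one may replace $T$ by the good Venttsel operator $T_0V=[V-V_{qq}]\big|_{p=0}$ and only needs $(L,T_0)$ to be an isomorphism (which then gives index zero for $(L,T)$ for free). With $T_0$ in place of $T$, the mixed Dirichlet--Venttsel problem in $\0_2$ is uniquely solvable in $C^{2+\alpha}$ (by \cite{LT91}, or by converting $T_0\wt W=\varphi$ into Dirichlet data), your reduction to $(\Lambda_1-\Lambda_2)g=\Phi$ is legitimate, and the multiplier analysis you sketch coincides, after unwinding, with the paper's treatment of the jump problem \eqref{VPPP}; note also that your kernel/cokernel bookkeeping between $(L,T_0)$ and $\Lambda_1-\Lambda_2$ itself requires the unique solvability of both auxiliary layer problems, so it cannot be run with $T$ either. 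As written, the proposal breaks at exactly the step the paper's compact-perturbation reduction was designed to circumvent.
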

\begin{proof}
 Let us presuppose that $(L, T_0)\in\kL(X,Y\times Z),$ 
 where 
 \[
 T_0 V:=\left[V - V_{qq}\right]\big|_{p=0}
 \]
 is an isomorphism.
 Then, it is obvious that we also have $(L, 2\sigma\lambda^{-1} T_0)\in{\rm Isom}(X,Y\times Z).$ 
Observing that
 \[
 (L,T)(v,V)=(L, 2\sigma\lambda^{-1} T_0)(v,V)+\left(0, \frac{2}{\lambda}\left[(g-\sigma )V -\lambda^{3/2}V_p\right]\big|_{p=0}\right)
 \]
 for all $(v,V)\in X,$ and since  the operator 
 \[
 \left[V\mapsto \frac{2}{\lambda}\left[(g-\sigma )V -\lambda^{3/2}V_p\right]\big|_{p=0}\right]\in\kL(X,Z)
 \]
 is compact, we deduce that $(L,T)$ is a Fredholm operator of index zero.
 
 Whence, we are left to prove that  $(L, T_0)\in{\rm Isom}(X,Y\times Z)$.
First, we observe that the kernel of the operator $(L, T_0)$ consists only of the zero vector.
Indeed, if $(L,T_0)(v,V)=0$ in $Y\times Z$ and $V$ has a positive maximum  at   $(\ov q,\ov p),$ then $\ov p\in\{p_1,0\},$ and  because $T_0V=0$ we must have  $\ov p=p_1.$ 
But, then also $v$ has a positive maximum  at   $(\ov q,\ov p).$
Applying Hopf's lemma in each domain $\0_1$ and $\0_2$, we  find that $V_p(\ov q,\ov p)<0$ and $v_p(\ov q,\ov p)>0.$ 
This contradicts the transmission condition $v_p=V_p$ on $p=p_1.$ 
 
 It remains to show that $(L, T_0) $ is onto.
 To this end, let $((f,F),\varphi)\in Y\times Z$ be given.
 The results  established in \cite{LT91} for second order elliptic equations with Venttsel boundary conditions imply that there exists a unique solution $W\in C^{2+\alpha}(\ov\0_2)$ of the problem
 \begin{align}\label{VP}
  \left\{
\begin{array}{rllll}
L_2 W&=&F&\text{in $\0_2$},\\
T_0W&=&\varphi&\text{on $p=0$},\\
W&=&0&\text{on $p=p_1$}.
\end{array}
\right.
 \end{align}
This property can be obtained also by referring to \cite{GT01}. 
Indeed, it is not difficult to see that the problem $T_0\wt W=\wt W-\wt W''=\varphi$ possesses a unique solution $\wt W\in C^{2+\alpha}(\s).$
Therefore, the function $W$ solving \eqref{VP} is the solution of the Dirichlet problem 
\[\text{$L_2W=F$\quad \text{in $\0_2$} \qquad $W=\wt\varphi$ \quad \text{on $\p\0_2$},}\]
 whereby $\wt\varphi\in C^{2+\alpha}(\ov\0_2)$ is defined by $\wt\varphi(q,p)=(1-p/p_1)\wt W(q)$.
 Moreover, we introduce the function $w\in C^{2+\alpha}(\ov\0_1)$ as being  the unique solution of the Dirichlet problem
  \begin{align}\label{DVP}
  \left\{
\begin{array}{rllll}
L_1 w&=&f&\text{in $\0_1$},\\
w&=&0&\text{on $p=p_1$},\\
w&=&0&\text{on $p=p_0$}.
\end{array}
\right.
 \end{align}
With this notation, it   suffices to show that  for every $\xi\in C^{1+\alpha}(\s)$, the diffraction problem 
  \begin{align}\label{VPP}
  \left\{
\begin{array}{rllll}
L_1 z&=&0&\text{in $\0_1$},\\
L_2 Z&=&0&\text{in $\0_2$},\\
T_0Z&=&0&\text{on $p=0$},\\
z&=&Z&\text{on $p=p_1$},\\
z_p-Z_p&=&\xi&\text{on $p=p_1$},\\
z&=&0&\text{on $p=p_0$},
\end{array}
\right.
 \end{align}
 possesses a (unique) solution $(z,Z)\in C^{2+\alpha}(\ov\0_1)\times C^{2+\alpha}(\ov\0_2).$
 Indeed, if this is true,  let $(z,Z)$ denote the solution corresponding to $\xi=\left(W_p-w_p\right)\big|_{p=p_1}.$
 Then,  $(v,V):=(w+z, W+Z)$ belongs to $X$ and it solves the equation $(L,T_0)(v,V)=((f,F),\varphi).$ 
 
 Hence, we are left to study the solvability of the problem \eqref{VPP}.
 Using elliptic maximum principles as we did before, it is easy to see that \eqref{VPP} has for each $\xi\in C^{1+\alpha}(\s)$ at most a classical solution. 
 From $T_0Z=0$ on $p=0$ we conclude that in fact  $Z=0$ on $p=0.$
 These facts  and the Theorems 16.1 and 16.2 of \cite{LU68} ensure the existence of a unique solution
 $(z,Z)$ of \eqref{VPP} with $z\in C^{1+\alpha}(\ov\0_1)\cap C^{2+\alpha}(\s\times [p_0,p_1))$ and $Z \in C^{1+\alpha}(\ov\0_2)\cap C^{2+\alpha}(\s\times (p_1,0]).$ 
 Let us remark that we only need to show that the restriction $z|_{p_1}\in C^{2+\alpha}(\s).$
 This property together with the first and last equation of \eqref{VPP} show  that $z\in C^{2+\alpha}(\ov\0_1)$.
 Since $z =Z $ on $p=p_1,$ we   also have $Z\in C^{2+\alpha}(\ov\0_2).$
 To finish the proof, we represent the mapping 
 \begin{equation}\label{FM}
  C^{1+\alpha}(\s)\ni\xi\mapsto z|p_1\in C^{1+\alpha}(\s)
 \end{equation}
 as a Fourier multiplier. 
 To this end, we introduce the functions  $a:=1/\uu'\in C^{\infty}([p_0,p_1])$ and $A:=1/\zu'\in C^{\infty}([p_1,0]),$
 where $(\uu,\zu)$ denote the solutions of \eqref{DPL}.
 Similarly as in \cite{CoSt04}, the first two equations of \eqref{VPP} can be written  in the more concise form
 \begin{equation}\label{L3}
 a^3L_1z=(a^3z_p)_p+(az_q)_q=0\quad \text{in $\0_1$}, \qquad A^3L_2Z=(A^3Z_p)_p+(AZ_q)_q=0 \quad \text{in $\0_2$}.
 \end{equation}
 Considering now the Fourier expansions of $\xi, z,$ and $Z$:
 \[
 \xi(q)=\sum_{k\in\Z}a_ke^{ikq},\quad z(q,p)=\sum_{k\in\Z}z_k(p)e^{ikq}, \quad Z(q,p)=\sum_{k\in\Z}Z_k(p)e^{ikq},
 \]
 we find that the functions $(z_k,Z_k),$ $k\in Z$, solve the following problem:  
   \begin{align}\label{VPPP}
  \left\{
\begin{array}{rllll}
(a^3z_k')'-k^2az_k&=&0&\text{$p_0<p<p_1$},\\
(A^3Z_k')'-k^2AZ_k&=&0&\text{$p_1<p<0$},\\
Z_k(0)&=&0,\\
z_k(p_1)&=&Z_k(p_1),\\
z_k'(p_1)-Z_k'(p_1)&=&a_k,\\
z_k(p_0)&=&0.
\end{array}
\right.
 \end{align}
 We already know from the solvability of \eqref{VPP} that the problem  \eqref{VPPP} possesses for each $k\in \Z$ a unique solution $(z_k,Z_k)$ of regularity $z_k\in C^{ 2+\alpha}([p_0,p_1))\cap  C^{ 1+\alpha}([p_0,p_1])$ and $Z_k\in C^{ 2+\alpha}((p_1,0])\cap  C^{1+\alpha}([p_1,0]).$ 
 Moreover, these functions can be computed explicitly. 
 Indeed, when $\gamma_1\gamma_2\neq0,$ using a substitution similar to that used in \cite[Section 8]{CoSt04}, we find from the first two equations of \eqref{VPPP} that
 \begin{equation}\label{EQ:E}
 z_k=\frac{2\gamma_1}{a}\left(\beta e^{-|k|a/\gamma_1}+\delta  e^{|k|a/\gamma_1}\right)\qquad\text{and}\qquad Z_k=\frac{2\gamma_2}{A}\left(\theta e^{-|k|A/\gamma_2}+\vartheta  e^{|k|A/\gamma_2}\right),
 \end{equation}
with real constants $\beta, \delta,\theta, \vartheta $ that can be determine by solving the last four equations of \eqref{VPPP}.
 After some tedious, though elementary, computations we obtain that $z_k(p_1)=\lambda_k a_k,$ whereby
 \begin{align}\label{Symb}
  \lambda_k:=\frac{a^2(p_1)}{\gamma_1-\gamma_2+a(p_1)\left[\coth\left(\Theta_1|k|\right)+\coth\left(\Theta_2|k|\right)\right]|k|}, \qquad k\in\Z,
 \end{align}
 and $\Theta_1$ and $\Theta_2$ are the positive expressions
 \begin{equation}\label{Expr}
 \Theta_1:=\frac{a(p_0)-a(p_1)}{\gamma_1} \qquad\text{and}\qquad \Theta_2:=\frac{A(p_1)-A(0)}{\gamma_2}.
\end{equation}
 When $k=0,$ the value $\lambda_0$ should be understood as the limit $\lim_{k\to0}\lambda _k.$
 The formula \eqref{Symb} is still true when $\gamma_1=0$ or $\gamma_2=0 $ with the mention that if $\gamma_1=0,$ then we have to replace $\Theta_1$ by 
its limit $\lim_{\gamma_1\to0}\Theta_1$, which is again a positive number (similarly when $\gamma_2=0$). 
 We note that the    solvability of \eqref{VPPP} ensures that the denominator of the right-hand side of \eqref{Symb} has to be different from zero.
 With this observation, it is not difficult to see that
 \begin{align}\label{CCC}
  \sup_{k\in\Z}|k||\lambda_k|<\infty\qquad\text{and}\qquad  \sup_{k\in\Z}|k|^2|\lambda_{k+1}-\lambda_k|<\infty.
 \end{align}
Since the mapping \eqref{FM} can be identified with the Fourier multiplier 
\[
\sum_{k\in\Z}a_ke^{ikq}\mapsto\sum_{k\in\Z}\lambda_ka_ke^{ikq}
\]
we infer from \eqref{CCC} and \cite[Theorem 2.1]{JL12} that it belongs to $\kL(C^{1+\alpha}(\s),C^{2+\alpha}(\s)).$
Consequently, the trace  $z\big|_{p=p_1}\in C^{2+\alpha}(\s)$, and this completes our argument.
\end{proof}

\section{The kernel of $\p_{(v,V)}(\cF,\cG)(\lambda,0)$}\label{Sec:4}

In this section we merely assume that $\gamma_1\in C^\alpha([p_0,p_1])$ and that $\gamma_2\in C^\alpha([p_0,p_1])$. 
From the analysis in Section \ref{Sec:3} it follows that, if  $(v,V)=(v_k(p)\cos(kq), V_k(p)\cos(kq))\in X$ belongs to the kernel of  $\p_{(v,V)}(\cF,\cG)(\lambda,0)$,
then the map 
\begin{equation}\label{eq:DE}
 \vv(p):=
 \left\{
 \begin{array}{lll}
  v_k(p),& p\in[p_0,p_1],\\
  V_k(p),& p\in[p_1,0],
 \end{array}
 \right.
\end{equation}
belongs to the real Hilbert space $H:=\{\vv\in H^2((p_0,0))\,:\, \vv(p_0)=0\}$, and it is also in the kernel of the Sturm-Liouville operator 
$R_{\lambda,\mu}:H\to L_2\times  \R,$ where $L_2:=L_2((p_0,0))$ and 
\begin{equation*}
 R_{\lambda,\mu}\vv:=
 \begin{pmatrix}
  (\bb^3 \vv')'-\mu \bb\vv\\
  (g+\sigma\mu)\vv(0)-\lambda^{3/2}\vv'(0)
 \end{pmatrix},
\end{equation*}
provided that $\mu=k^2.$
Hereby, the function 
\begin{equation}\label{An}
 \bb(p):=\bb(p;\lambda):=\sqrt{\lambda-2\Gamma(p)}, \qquad p\in[p_0,0],
\end{equation}
belongs to $C^{\infty}([p_0,p_1])\cap C^{\infty}([p_1,0])$  for all $\lambda\in (2\max_{[p_0,0]}\Gamma,\infty).$
We note that the first derivative of  $\bb$ has a jump at $p_1.$
Vice versa, if $\vv$ belongs to the kernel of $R_{\lambda,\mu}$ and $\mu=k^2$, then letting $(v_k,V_k)$
 be given by \eqref{eq:DE}, the vector $(v,V)=(v_k(p)\cos(kq), V_k(p)\cos(kq))\in X$ belongs to the kernel of $\p_{(v,V)}(\cF,\cG)(\lambda,0)$.
This correspondence motivates us to study the kernel of  $R_{\lambda,\mu}$.

\begin{lemma}\label{L:2}
For every  $(\lambda,\mu)\in(2\max_{[p_0,0]}\Gamma,\infty)\times[0,\infty)$, the operator $R_{\lambda,\mu}$ is a Fredholm operator of index zero and its kernel is at most one-dimensional.
\end{lemma}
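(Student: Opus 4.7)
The plan is to treat $R_{\lambda,\mu}$ as a compact perturbation of an isomorphism, and then use an ODE uniqueness argument for the kernel bound.

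First I would introduce the auxiliary operator $A:H\to L_2\times\R$ defined by
\[
 A\vv:=\bigl((\bb^3\vv')'-\mu\bb\vv,\ -\lambda^{3/2}\vv'(0)\bigr)
\]
and show it is an isomorphism. Injectivity is an energy estimate: if $A\vv=0$, then multiplying $(\bb^3\vv')'=\mu\bb\vv$ by $\vv$ and integrating by parts on $(p_0,0)$ kills the boundary terms (we use $\vv(p_0)=0$ and $\vv'(0)=0$), leaving
\[
 \int_{p_0}^{0}\bb^3(\vv')^2\,dp+\mu\int_{p_0}^{0}\bb\vv^2\,dp=0,
\]
which, since $\bb>0$ and $\mu\geq0$, forces $\vv\equiv0$ (using $\vv(p_0)=0$). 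For surjectivity, I would formulate the equation $A\vv=(f,c)$ as a weak BVP in the Hilbert space $H^1_D:=\{\vv\in H^1((p_0,0)):\vv(p_0)=0\}$ with bilinear form
\[
 a(\vv,w):=\int_{p_0}^{0}\bb^3\vv'w'\,dp+\mu\int_{p_0}^{0}\bb\vv w\,dp,
\]
which is coercive on $H^1_D$ by the Poincar\'e inequality, and invoke Lax--Milgram together with elliptic regularity on each subinterval $(p_0,p_1)$ and $(p_1,0)$ to obtain the $H^2$-solution.

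Next, I would note that
\[
 R_{\lambda,\mu}\vv=A\vv+K\vv,\qquad K\vv:=\bigl(0,\ (g+\sigma\mu)\vv(0)\bigr).
\]
The map $\vv\mapsto\vv(0)$ is bounded from $H$ to $\R$, so $K$ has one-dimensional range and is therefore compact. Hence $R_{\lambda,\mu}$ is a compact perturbation of an isomorphism, which yields that it is Fredholm of index zero.

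Finally, for the kernel bound, let $\vv\in\ker R_{\lambda,\mu}$. Then $\vv\in H^2\subset C^1([p_0,0])$ satisfies $\vv(p_0)=0$ and solves the ODE $(\bb^3\vv')'=\mu\bb\vv$ on each of the subintervals $[p_0,p_1]$ and $[p_1,0]$, where $\bb^3$ is smooth and strictly positive. Prescribing $\vv'(p_0)=\alpha$ determines $\vv$ uniquely on $[p_0,p_1]$ by classical ODE uniqueness; the values $\vv(p_1),\vv'(p_1)$ so obtained then serve as initial data on $[p_1,0]$, again giving a unique continuation (the matching at $p_1$ is automatic since $\vv\in C^1$). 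Therefore the solutions of the homogeneous equation with $\vv(p_0)=0$ form an at most one-dimensional space parametrized by $\alpha$, so $\ker R_{\lambda,\mu}$ is at most one-dimensional.

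The only mildly delicate point will be ensuring that Lax--Milgram delivers a solution lying in $H^2$ despite the coefficient $\bb^3$ only being piecewise smooth, with a jump in its first derivative at $p_1$; this is handled by applying interior/boundary $H^2$-regularity on each subinterval separately, since $\bb^3\vv'$ is globally continuous and $\vv'\in H^1$ provides enough regularity to patch the two halves together into an element of $H$.
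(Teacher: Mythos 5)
Your proof is correct and follows essentially the same route as the paper: the same decomposition of $R_{\lambda,\mu}$ into the auxiliary operator $\vv\mapsto\bigl((\bb^3\vv')'-\mu\bb\vv,\,-\lambda^{3/2}\vv'(0)\bigr)$, shown to be an isomorphism via coercivity, Poincar\'e's inequality and Lax--Milgram (with the routine upgrade to $H^2$ despite the kink of $\bb$ at $p_1$), plus the rank-one, hence compact, trace term $(0,(g+\sigma\mu)\vv(0))$. The only divergence is the last step, where you bound the kernel by ODE uniqueness on the two subintervals with $C^1$ matching at $p_1$, whereas the paper observes that the Wronskian $\bb^3(\vv_2\vv_1'-\vv_1\vv_2')$ of two kernel elements is constant and vanishes at $p_0$; both arguments are valid and equally elementary.
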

\begin{proof}
 Similarly as in the proof of Theorem \ref{T:1}, we define  the compact perturbation $\mathcal{R}$ of $R_{\lambda,\mu}$ by the relation 
 \[
 \mathcal{R}\vv:=R_{\lambda,\mu}\vv- 
 \begin{pmatrix}
 0\\
  (g+\sigma\mu) \vv(0)
 \end{pmatrix}, \qquad \vv\in H.
 \]
 The first part of our claim follows from the fact that the operator $\mathcal{R}$ is a an isomorphism.
 Indeed, given $(f,z)\in L_2\times\C$, if the vector $\vv\in H $ solves the equation $\mathcal{R} \vv=(f,z),$
 then for all $\varphi\in H^1_0:=\{\varphi\in H^1((p_0,0))\,:\, \varphi(p_0)=0\} $ we have
 \begin{equation}\label{VF}
  \int_{p_0}^0\left(\bb^3\vv'\varphi'+\mu \bb\vv\varphi\right)dp=-z\varphi(0)-\int_{p_0}^0 f\varphi\, dp.
 \end{equation}
The right-hand side of \eqref{VF} defines an element of $\kL(H^1_0,\R),$ while the left-hand side of \eqref{VF} defines, in view of Poincar\'e's inequlality, a bounded coercive bilinear  functional on $H^1_0\times H^1_0.$
Using the Lax-Milgram theorem, cf. \cite[Theorem 5.8]{GT01}, we obtain a unique vector $\vv\in H^1_0$ which solves the variational formulation \eqref{VF}.
It is immediate to see that in fact $\vv\in H.$ 
This proves that indeed $\mathcal{R}\in{\rm Isom}(H, L_2\times\R).$

To finish the proof we see that if $\vv_1,\vv_2\in H$ are two vectors in the kernel of $R_{\lambda,\mu},$
then
\begin{align}\label{C}
 0=((\bb^3 \vv_1')'-\mu \bb\vv_1 )\vv_2-((\bb^3 \vv'_2)'-\mu \bb\vv_2 )\vv_1=((\bb^3 (\vv_2\vv'_1-\vv_1\vv_2'))'\qquad\text{in $(p_0,0)$,}
\end{align}
which implies that $\bb^3 (\vv_2\vv'_1-\vv_1\vv_2')$ is constant.
Since $\vv_1(p_0)=\vv_2(p_0)=0$ and $\bb>0,$ we obtain that $\vv_1$ and $\vv_2$ are linearly dependent.    
\end{proof}

In order to determine when the operator $R_{\lambda,\mu}$ has a nontrivial kernel, we define for each pair  $(\lambda,\mu)\in(2\max_{[p_0,0]}\Gamma,\infty)\times [0,\infty)$ the functions $\zz,\vv\in H^2((p_0,0))$ with
 $\zz:=\zz(\cdot;\lambda,\mu)$ and $\vv:=\vv(\cdot;\lambda,\mu)$ as solutions of  the initial value problems 
\begin{equation}\label{ERU}
\left\{\begin{array}{lll}
  (\bb^3 \zz')'-\mu \bb\zz=0\qquad \text{in $(p_0,0)$},\\[1ex]
  \zz(p_0)=0,\quad \zz'(p_0)=1,
 \end{array}
 \right.\hspace{1cm}
 \left\{\begin{array}{lll}
  (\bb^3 \vv')'-\mu \bb\vv=0\qquad \text{in $(p_0,0)$},\\[1ex]
  \vv(0)=\lambda^{3/2},\quad \vv'(0)=g+\sigma\mu.
 \end{array}
 \right.
\end{equation}
These problems can be seen as system of first order linear ordinary differential equations, and therefore the existence and uniqueness  of  $\zz,\vv$ follows from the classical theory, cf. \cite{A83}.  
\begin{lemma}\label{L:3}
 Given $(\lambda,\mu)\in(2\max_{[p_0,0]}\Gamma,\infty)\times [0,\infty)$, the operator $R_{\lambda,\mu}$ has a nontrivial kernel exactly when  the functions $\zz $ and $\vv$, given by \eqref{ERU}, are linearly dependent. 
\end{lemma}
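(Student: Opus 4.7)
The plan is to reduce the statement to the classical fact that two solutions of a linear second-order homogeneous ODE are linearly dependent iff their Wronskian vanishes at a single point. The only mild subtlety is that the coefficient $\bb$ in $(\bb^3 u')' - \mu \bb u = 0$ is merely continuous at $p=p_1$ (with a jump in $\bb'$), so I would avoid the expanded form and instead work with the first-order linear system for the vector $(u,\bb^3 u')$, whose coefficient matrix $\begin{pmatrix}0 & \bb^{-3} \\ \mu\bb & 0\end{pmatrix}$ is continuous on all of $[p_0,0]$. Classical ODE theory (existence, uniqueness, Abel's identity) then applies without concern across $p_1$.

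For the implication ``$\zz,\vv$ linearly dependent $\Rightarrow$ $\ke R_{\lambda,\mu}\neq 0$'', I would first note that neither $\zz$ nor $\vv$ is identically zero: $\zz'(p_0)=1$ and $\vv(0)=\lambda^{3/2}>0$. Hence linear dependence yields $\vv=c\zz$ for some $c\neq 0$, so $\vv(p_0)=c\zz(p_0)=0$, giving $\vv\in H$. By the very definition of $\vv$, one checks
\[
(g+\sigma\mu)\vv(0)-\lambda^{3/2}\vv'(0)=(g+\sigma\mu)\lambda^{3/2}-\lambda^{3/2}(g+\sigma\mu)=0,
\]
so $\vv\in\ke R_{\lambda,\mu}\setminus\{0\}$.

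For the converse, let $0\neq\ww\in\ke R_{\lambda,\mu}$. Since $\ww$ solves the same ODE as $\zz$ with $\ww(p_0)=0=\zz(p_0)$, and the initial-value problem for the first-order system is uniquely solvable, $\ww$ must be a scalar multiple of $\zz$ (the case $\ww'(p_0)=0$ would force $\ww\equiv 0$). Consequently $\zz$ itself satisfies the top boundary condition:
\[
(g+\sigma\mu)\zz(0)-\lambda^{3/2}\zz'(0)=0.
\]
Now I would form the Wronskian-type quantity
\[
W(p):=\zz(p)\,\bb^3(p)\vv'(p)-\vv(p)\,\bb^3(p)\zz'(p),
\]
which, in view of the trace-zero structure of the first-order system, is constant on $[p_0,0]$ (the identity $W'=0$ also follows directly from multiplying the ODE for $\zz$ by $\vv$, the one for $\vv$ by $\zz$, subtracting, and noting the $\mu\bb\vv\zz$ terms cancel). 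Evaluating at $p=0$ gives
\[
W(0)=\bb^3(0)\bigl[(g+\sigma\mu)\zz(0)-\lambda^{3/2}\zz'(0)\bigr]=0,
\]
hence $W\equiv 0$, which is the standard linear-dependence criterion for two solutions of the underlying linear system.

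The argument is essentially routine ODE theory; the only point requiring a moment's thought is the low regularity of $\bb$ at $p_1$, which I would handle once, at the start, by passing to the first-order system $(u,\bb^3 u')$. After that, everything reduces to the classical statement about Wronskians.
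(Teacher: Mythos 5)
Your proof is correct and follows essentially the same route as the paper: both directions rest on the constancy of the Wronskian-type quantity $\bb^3(\zz\vv'-\vv\zz')$ (the paper's relation \eqref{C}) together with uniqueness for the associated first-order system and evaluation at the endpoints $p_0$ and $0$. Your explicit reduction to the system $(u,\bb^3u')$ to handle the jump of $\bb'$ at $p_1$ matches the paper's remark that \eqref{ERU} is to be read as a first-order linear system, so no further comparison is needed.
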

\begin{proof}
 It is easy to see that if $\zz $ and $\vv$ are linearly dependent, then they  both belong to the kernel of $R_{\lambda,\mu}.$
 On the other hand, if $R_{\lambda,\mu}\mathfrak{z}=0,$ then it follows from relation \eqref{C} that $\{\zz, \mathfrak{z}\}$ and $\{\vv, \mathfrak{z}\}$
 are linearly dependent systems.
\end{proof}

Summarizing, the previous lemmas state that $\ke R_{\lambda,\mu} $ is non-trivial (and has  dimension one) exactly when $(\lambda,\mu)$ is a zero  of the function $\Xi:(2\max_{[p_0,0]}\Gamma,\infty)\times  [0,\infty)\to\R$ given by
\begin{equation}\label{DEF}
\Xi(\lambda,\mu):=\lambda^{3/2}\zz'(0;\lambda,\mu)-(g+\sigma\mu)\zz(0;\lambda,\mu).
\end{equation}
Invoking  \eqref{An} and \eqref{ERU}, we note that the function $\Xi$ is real-analytic.
Particularly, its zeros are isolated.
Of course, we are interested only in the zeros for which $\mu=k^2$ for some $k\in\N.$
If $\mu=0,$ then $\zz$ can be computed explicitly
\[
\zz(p;\lambda,0):=\int_{p_0}^p\frac{\bb^3(p_0)}{\bb^3(s)}\, ds,\qquad p\in[p_0,0].
\]
In this case, $\Xi(\lambda,0)=0$ if and only if $\lambda$ solves the equation
\begin{equation}\label{QU}
 \frac{1}{g}=\int_{p_0}^0\frac{1}{\bb^3(p)}\, dp.
\end{equation}
The right-hand side of \eqref{QU} is a strictly decreasing function of $\lambda,$ 
\[
\int_{p_0}^0\frac{1}{\bb^3(p)}\, dp \, { \underset{\lambda\to\infty}{\longrightarrow } 0}\qquad\text{and}\qquad \int_{p_0}^0\frac{1}{\bb^3(p)}\, dp \, {\underset{\lambda\to2\max_{[p_0,0]}\Gamma}{\longrightarrow} \infty}.
\]
Consequently, there exists a unique $\lambda_0\in(2\max_{[p_0,0]}\Gamma,\infty)$ which satisfies \eqref{QU}.
Since for $\mu>0$ we cannot determine in general an explicit expression for $\zz$, determining the zeros of $\Xi(\cdot,\mu)$ is more difficult.
Nevertheless, we have the following result.

\begin{lemma}\label{L:4}
Let $\lambda_0$ be the unique solution of $\Xi(\lambda,0)=0.$ 
 Then,  we have
 \begin{align}
 (i)\quad &\Xi(\lambda,0)>0 \qquad\text{for all $\lambda>\lambda_0$};\label{LIM1}\\
 (ii)\quad &\lim_{\mu\to\infty} \Xi(\lambda,\mu)=-\infty \qquad\text{for all $\lambda \geq\lambda_0$.}\label{LIM2}
 \end{align}
\end{lemma}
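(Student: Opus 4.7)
For \emph{(i)}, the plan is to integrate the ODE in \eqref{ERU} explicitly at $\mu=0$ and substitute into \eqref{DEF}. Since $(\bb^3\zz')'=0$ with $\zz'(p_0)=1$, I obtain $\zz'(p;\lambda,0)=\bb^3(p_0)/\bb^3(p)$; integrating and using $\bb(0)=\sqrt\lambda$ (which holds because $\Gamma(0)=0$) gives
\[
\zz(0;\lambda,0)=\bb^3(p_0)\int_{p_0}^0\frac{dp}{\bb^3(p)},\qquad \zz'(0;\lambda,0)=\frac{\bb^3(p_0)}{\lambda^{3/2}},
\]
whence $\Xi(\lambda,0)=\bb^3(p_0)\bigl(1-g\int_{p_0}^0 dp/\bb^3(p;\lambda)\bigr)$. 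Assertion \emph{(i)} then follows from the strict monotonicity of $\lambda\mapsto\int_{p_0}^0 dp/\bb^3(p;\lambda)$ (noted after \eqref{QU}) and the characterization \eqref{QU} of $\lambda_0$.

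For \emph{(ii)}, the plan is to identify the leading-order behavior of $\zz(0;\lambda,\mu)$ and $\zz'(0;\lambda,\mu)$ as $\mu\to\infty$ through a Liouville--Green transformation. I would set
\[
t:=\int_{p_0}^p\frac{ds}{\bb(s)}\in[0,T],\qquad T:=\int_{p_0}^0\frac{ds}{\bb(s)},\qquad w(t):=\bb(p(t))\,\zz(p(t);\lambda,\mu),
\]
with $p(t)$ the inverse of $p\mapsto t$. A direct calculation turns the first equation of \eqref{ERU} into the normal form
\[
\ddot w(t)=(\mu+V(t))\,w(t)\quad\text{on }[0,T],\qquad w(0)=0,\quad \dot w(0)=\bb^2(p_0),
\]
where $V:=\ddot\bb/\bb$ is bounded on $[0,T]$ (with a possible jump at the image of $p_1$, which is inconsequential). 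The Riccati quantity $\rho(t):=\dot w(t)/w(t)$ solves $\dot\rho=\mu+V-\rho^2$ with $\rho(0^+)=+\infty$, and for large $\mu$ is rapidly attracted (after a transient of length $\sim 1/\sqrt\mu$) to its stable equilibrium $\sqrt{\mu+V(t)}=\sqrt\mu+O(\mu^{-1/2})$. A Gronwall comparison of $w$ with the explicit solution $w_0(t):=\bb^2(p_0)\sinh(\sqrt\mu t)/\sqrt\mu$ of $\ddot w_0=\mu w_0$ (with identical initial data) then yields
\[
w(T;\mu)\xrightarrow[\mu\to\infty]{}+\infty,\qquad \rho(T;\mu)=\sqrt\mu+O(1).
\]

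Translating back via $\zz(0)=w(T)/\sqrt\lambda$ and $\zz'(0)/\zz(0)=\rho(T)/\sqrt\lambda-\bb'(0)/\sqrt\lambda$, one finds
\[
\frac{\Xi(\lambda,\mu)}{\zz(0;\lambda,\mu)}=\lambda^{3/2}\frac{\zz'(0)}{\zz(0)}-g-\sigma\mu=\lambda\sqrt\mu-\sigma\mu-g+O(\sqrt\mu)\xrightarrow[\mu\to\infty]{}-\infty,
\]
and assertion \emph{(ii)} follows since $\zz(0;\lambda,\mu)>0$ and diverges to $+\infty$. The main obstacle is the rigorous justification of the WKB-type asymptotics $w(T;\mu)\to+\infty$ and $\rho(T;\mu)=\sqrt\mu+O(1)$, i.e.\ verifying that the bounded perturbation $V$ contributes only a lower-order correction to the unperturbed behavior. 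The jump of $V$ at the image of $p_1$ is dealt with by running the Gronwall/Riccati comparison separately on each of the two subintervals and matching the values of $w$ and $\dot w$ at the interface.
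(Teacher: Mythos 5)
Your part (i) is essentially the paper's own argument: both of you reduce to the explicit value $\Xi(\lambda,0)=\bb^3(p_0)\bigl(1-g\int_{p_0}^0\bb^{-3}(p)\,dp\bigr)$ (the paper reads it off from the integral identity \eqref{nnn}) and then invoke the strict monotonicity in $\lambda$ of $\int_{p_0}^0\bb^{-3}dp$ together with the characterization \eqref{QU} of $\lambda_0$. For part (ii), however, you take a genuinely different route. The paper stays with \eqref{nnn}, splits $\Xi=T_1+\mu T_2$, traps $\zz$ between constant-coefficient solutions $\uw\le\zz\le\ow$ on the shrinking interval $[-\mu^{-2/3},0]$, and then carries out the fairly heavy explicit computations for $T_A$ and $T_B$ (including an application of l'H\^opital's rule). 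Your Liouville--Green/Riccati argument instead extracts the asymptotics $\zz'(0)/\zz(0)=\sqrt{\mu/\lambda}+O(1)$ and combines it with the trivial $\mu$-independent lower bound $\zz(0;\lambda,\mu)\ge\zz(0;\lambda,0)>0$ (immediate from \eqref{nnn}), so that $\Xi\le\zz(0)\bigl(\lambda\sqrt{\mu}-\sigma\mu-g+O(1)\bigr)\to-\infty$. This makes transparent that it is precisely the surface-tension term $-\sigma\mu\,\zz(0)$ in \eqref{DEF} that forces the conclusion, and it replaces the paper's computations by a standard Sturm/Riccati comparison with $\sqrt{\mu+m}\coth(\sqrt{\mu+m}\,t)$ and $\sqrt{\mu+M}\coth(\sqrt{\mu+M}\,t)$ on each subinterval; the final passage from $\Xi/\zz(0)\to-\infty$ to $\Xi\to-\infty$ is then correct.

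Two points in your outline need repair, both local and fixable. First, the effect of the vorticity jump is not ``a bounded jump of $V$'': since $\dot\bb=-\gamma$ in your variable $t$, for the paper's piecewise constant $\gamma$ one has $V\equiv0$ on each subinterval and the entire discontinuity appears as a delta interaction at $t_1$ (the image of $p_1$); there $w$ is continuous but $\dot w$ jumps by $-(\gamma_2-\gamma_1)\,w(t_1)/\bb(p_1)$, the continuous quantity being $\bb^3\zz'=\bb\dot w-\dot\bb\,w$ rather than $\dot w$. So ``matching the values of $w$ and $\dot w$'' is not the correct transmission condition; what saves the argument is that the induced jump of $\rho=\dot w/w$ at $t_1$ is $O(1)$ uniformly in $\mu$, after which the Riccati flow (attracted to $\sqrt{\mu+V}$) restores $\rho(T)=\sqrt{\mu}+O(1)$, so your conclusion survives once the matching is stated correctly. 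Second, Section~\ref{Sec:4} assumes only $\gamma_i\in C^\alpha$ on each layer, in which case $V=\ddot\bb/\bb$ need not exist pointwise; your normal form is literally valid for piecewise $C^1$ (in particular piecewise constant) vorticity, which covers Theorem~\ref{MT} but not the full generality in which $\Xi$ is introduced, whereas the paper's comparison on $[-\mu^{-2/3},0]$ uses only boundedness of $\bb'$ and is insensitive to this. With these adjustments your proof is sound.
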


\begin{proof}
 It follows readily from \eqref{ERU} that $\zz$ satisfies the following integral relation
 \begin{align}\label{nnn}
 \zz(p)=\int_{p_0}^p\frac{\bb^3(p_0)}{\bb^3(s)}\, ds+\mu\int_{p_0}^p\frac{1}{\bb^3(s)}\int_{p_0}^s(\bb\zz)(r)\, dr\, ds,\qquad p\in[p_0,0].
 \end{align}
 Therefore, we have
 \begin{align*}
  \Xi(\lambda,0)=g\bb^3(p_0)\left(\frac{1}{g}-\int_{p_0}^0\frac{1}{\bb^3(p)}\, dp\right)>0
 \end{align*}
for all $\lambda>\lambda_0,$ which proves \eqref{LIM1}.

In order to show  \eqref{LIM2}, we fix $\lambda \geq\lambda_0$ and  use \eqref{nnn} to decompose  $\Xi(\lambda,\mu)=T_1+\mu T_2,$
whereby 
\begin{align*}
 T_1&:=\bb^3(p_0)\left(1-(g+\sigma\mu)\int_{p_0}^0\frac{1}{\bb^3(p)}\, dp\right),\\
 T_2&:=\int_{p_0}^0(\bb\zz)(p)\, dp-(g+\sigma\mu)\int_{p_0}^0\frac{1}{\bb^3(s)}\int_{p_0}^s(\bb\zz)(r)\, dr\, ds,
\end{align*}
depend only on   $\mu$.
Since $\bb$ does not depend on $\mu$, we obtain  that $ T_1\to-\infty$ as $\mu\to\infty.$
Before studying the behavior of $T_2$, let us infer  from \eqref{ERU} and \eqref{nnn} that  $\zz$  and $\zz'$ are both positive on $(p_0,0].$  
In fact, an  argument similar to that used to deduce the relations \eqref{Rell} and \eqref{Rel} below shows that  for each  $p\in(p_0,0],$ $\min_{[p,0]}\zz$  and $\min_{[p,0]}\zz'$ grow at an exponential rate as  $\mu\to\infty.$
Thus, proving that $ T_2\to-\infty$ when $\mu\to\infty,$ is not obvious.
However, because $\bb$ does not depend on $\mu,$ integration by parts shows that $ T_2\to-\infty$ when $\mu\to\infty $ if we have
\begin{align*}
 \lim_{\mu\to\infty} \left( \int_{p_0}^0\zz(p)\, dp-\mu^{6/7}\int_{p_0}^0(-p)\zz(p)\, dp\right)=-\infty.
\end{align*}
Noticing that
\begin{align*}
& \left(\int_{p_0}^{-\mu^{-2/3}}\zz(p)\, dp-\mu^{6/7}\int_{p_0}^{-\mu^{-2/3}}(-p)\zz(p)\, dp\right)\\[1ex]
&\leq(\mu^{2/3}-\mu^{6/7}) \int_{p_0}^{-\mu^{-2/3}}(-p)\zz(p)\, dp\underset{\mu\to\infty}\to-\infty,
\end{align*}
it suffices  to prove that
\begin{align}\label{DERE}
 \lim_{\mu\to\infty} \left( \int_{-\mu^{-2/3}}^0\zz(p)\, dp-\mu^{6/7}\int_{-\mu^{-2/3}}^0(-p)\zz(p)\, dp\right)=-\infty.
\end{align}
Because the interval $[-\mu^{-2/3},0]$ is very small when $\mu$ is large,  
we can approximate $\zz$ on this interval by solutions of some linear initial value problems with constant coefficients.
To be more precise, we let $\ww$ denote the solution of the linear initial value problem
\begin{equation}\label{CC}
\left\{\begin{array}{lll}
  \ww''+ C\ww'-\mu D\ww=0\qquad \text{in $(p_0,0)$},\\[1ex]
  \ww(-\mu^{-2/3})=A,\quad \ww'(-\mu^{-2/3})=B,
 \end{array}
 \right.
\end{equation}
where $C\in\R$ and $D>0$ are constants, $A:=\zz(-\mu^{-2/3}),$ and $B:=\zz'(-\mu^{-2/3}).$
The solution $\ww$ of \eqref{CC} is given by the following formula
\begin{align}\label{Rell}
 \ww(p)=\frac{A}{r_1-r_2}\left(r_1e^{r_2(p+\mu^{-2/3})}-r_2e^{r_1(p+\mu^{-2/3})}\right)+\frac{B}{r_1-r_2}\left(e^{r_1(p+\mu^{-2/3})}-e^{r_2(p+\mu^{-2/3})}\right)
\end{align}
for $p\in [-\mu^{-2/3},0]$, whereby
\begin{equation}\label{ov}
r_1:=\frac{-C+\sqrt{C^2+4 D\mu}}{2}\qquad\text{and}\qquad r_2:=\frac{-C-\sqrt{C^2+4 D \mu}}{2}.
\end{equation}
The idea of considering the problem \eqref{CC} is the following: defining the $\mu$-dependent functions 
\begin{align}
\underline C:=\max_{[-\mu^{-2/3},0]} \frac{3\bb'}{\bb},\quad \overline C:=\min_{[-\mu^{-2/3},0]} \frac{3\bb'}{\bb},\quad \underline D:=\min_{[-\mu^{-2/3},0]} \frac{1}{\bb^2},\quad \overline D:=\max_{[-\mu^{-2/3},0]} \frac{1}{\bb^2},
\end{align}
it is not difficult to see by subtracting the equations of \eqref{CC} from those satisfied by $\zz$ that we have
\begin{align}\label{Rel}
 \uw\leq \zz\leq \ow \qquad\text{ on $[-\mu^{-2/3},0]$.}
\end{align}
Hereby, $\uw$ and $\ow$ are the solutions of \eqref{CC} corresponding to   $(\underline C, \underline D),$ and $(\overline C, \overline D),$ respectively.
Therefore, the relation \eqref{DERE} is fulfilled if we  show that
\begin{align}\label{DEREK}
 \lim_{\mu\to\infty} \left( \int_{-\mu^{-2/3}}^0\ow(p)\, dp-\mu^{6/7}\int_{-\mu^{-2/3}}^0(-p)\uw(p)\, dp\right)=-\infty.
\end{align}
An elementary computation now gives
\begin{align*}
  \int_{-\mu^{-2/3}}^0\ow(p)\, dp-\mu^{6/7}\int_{-\mu^{-2/3}}^0(-p)\uw(p)\, dp=AT_A+BT_B,
\end{align*}
whereby
\begin{align*}
 T_A:=&\frac{1}{\overline r_1-\overline r_2}\left(\frac{\overline r_1}{\overline r_2}(e^{\overline r_2/\mu^{2/3}}-1)-\frac{\overline r_2}{\overline r_1}(e^{\overline r_1/\mu^{2/3}}-1)\right)
 -\frac{\mu^{6/7-2/3}}{\underline r_1-\underline r_2}\left(\frac{\underline r_2}{\underline r_1}-\frac{\underline r_1}{\underline r_2}\right)\\
& -\frac{\mu^{6/7}}{\underline r_1-\underline r_2}\left(\frac{\underline r_1}{\underline r_2^2}(e^{\underline r_2/\mu^{2/3}}-1)-\frac{\underline r_2}{\underline r_1^2}(e^{\underline r_1/\mu^{2/3}}-1)\right),\\
T_B:=&\frac{1}{\overline r_1-\overline r_2}\left(\frac{1}{\overline r_1}(e^{\overline r_1/\mu^{2/3}}-1)-\frac{1}{\overline r_2}(e^{\overline r_2/\mu^{2/3}}-1)\right)
 -\frac{\mu^{6/7-2/3}}{\underline r_1-\underline r_2}\left(\frac{1}{\underline r_2}-\frac{1}{\underline r_1}\right)\\
& -\frac{\mu^{6/7}}{\underline r_1-\underline r_2}\left(\frac{1}{\underline r_1^2}(e^{\underline r_1/\mu^{2/3}}-1)-\frac{1}{\underline r_1^2}(e^{\underline r_2/\mu^{2/3}}-1)\right),\\
\end{align*}
and with $(\underline r_1, \underline r_2)$ and $(\overline r_1, \overline r_2)$ being defined by \eqref{ov} with $(C,D)$ being replaced by $(\underline C, \underline D),$ and $(\overline C, \overline D),$ respectively.
The claim \eqref{DEREK} follows from the following properties
\begin{align}\label{FP}
 T_A\to_{\mu \to\infty}-\infty\qquad\text{and}\qquad \mu^{2/3 }T_B\to_{\mu\to\infty}-\infty.
\end{align}

We establish first the claim for $T_A.$
Clearly, it  suffices to show that
\begin{align*}
 & \frac{\underline r_1-\underline r_2}{\overline r_1-\overline r_2}\left(\frac{\overline r_1}{\overline r_2}(e^{\overline r_2/\mu^{2/3}}-1)-
\frac{\overline r_2}{\overline r_1}(e^{\overline r_1/\mu^{2/3}}-1)\right)
 - \mu^{6/7-2/3}\left(\frac{\underline r_2}{\underline r_1}-\frac{\underline r_1}{\underline r_2}\right)\nonumber\\
&-\mu^{6/7}\left(\frac{\underline r_1}{\underline r_2^2}(e^{\underline r_2/\mu^{2/3}}-1)-\frac{\underline r_2}{\underline r_1^2}(e^{\underline r_1/\mu^{2/3}}-1)\right)
=:E_1-E_2-E_3\to_{\mu\to\infty}-\infty.
\end{align*}
Let us now observe that $(\underline C, \underline D)$ and $(\overline C, \overline D)$ converge, when $\mu\to\infty$, 
towards the constant pair  $(C,D):=(3\bb'(0)/\bb(0), 1/\bb^2(0))$.
Because $\overline r_i/\mu^{2/3}\to0$   as $\mu\to\infty,$   we easily see that $ E_1\to0.$ 
Moreover, there exists a constant  $K$, independent of $\mu,$ such that
\begin{align*}
 \left| E_2\right|&=\mu^{6/7-2/3} \left|\frac{\underline r_2}{\underline r_1}-\frac{\underline r_1}{\underline r_2}\right|=\mu^{6/7-2/3} 
 \frac{\left|\underline r_2^2-\underline r_1^2\right|}{\underline r_1\underline r_2}\leq K\mu^{6/7-2/3-1/2} 
\end{align*}
and we are left to consider $E_3.$
To this end, we write $E_3=E_{3a}+E_{3b}$ where we set
\[
E_{3a}:=\mu^{6/7} \left(\frac{\sqrt{D \mu }}{D \mu}(e^{-\sqrt{D}\mu^{-1/6}}-1)+\frac{\sqrt{D\mu}}{ D\mu}(e^{\sqrt{D}\mu^{-1/6}}-1)\right).
\]
We note that $E_{3a}$ is obtained by replacing in the definition of $E_3$ the constants $(\underline C, \underline D)$ and $(\overline C, \overline D)$ by 
their limit  $(C,D)$ and retaining only the highest order terms in $\mu.$
Since by the mean value theorem we have 
\[
\max\{|\overline D-D|,|\underline D-D|\}\leq K \mu^{-2/3},
\]
where $K$ is again independent of  $\mu,$
 one can show that $  E_{3b}\to_{\mu\to\infty}0.$
 Furthermore, using  l'Hopitals rule we get
 \begin{align*}
  \lim_{\mu\to\infty}E_{3a} =\frac{7}{15}\lim_{\mu\to\infty}\frac{e^{ \sqrt{D} \mu^{-1/6}}- e^{-\sqrt{D}\mu^{-1/6}}}{\mu^{-1/6}}\mu^{1/42}=\infty.
 \end{align*}
Thus, $  E_{3a}\to\infty,$ and we conclude that $ T_A\to_{\mu\to\infty}-\infty.$
The second claim of \eqref{FP} follows similarly. 
This finishes the proof.
\end{proof}

Invoking Lemma \ref{L:4}, we find for every $\lambda\geq\lambda_0$ a unique constant $\mu(\lambda)\in[0,\infty)$ such that 
\begin{equation}\label{dm}
\begin{aligned}
& \Xi(\lambda,\mu(\lambda))=0;\\
&\text{$\Xi(\lambda,\mu)<0$ for all $\mu>\mu(\lambda).$}
\end{aligned}
\end{equation}

Since   $\Xi(\lambda,0)>0$ when $\lambda>\lambda_0,$  any  zero of $\Xi(\lambda,\cdot)$ satisfies $\mu>0$, provided $\lambda>\lambda_0.$
We next prove  that $\lambda\mapsto \mu(\lambda) $ is a real-analytic and strictly increasing curve.

\begin{lemma}\label{L:5} Assume that $(\ov\lambda,\ov\mu)\in[\lambda_0,\infty)\times(0,\infty)$ satisfies $\Xi(\ov\lambda,\ov\mu)=0.$
Then, we have
\begin{align}\label{slim}
 \Xi_\lambda(\ov\lambda,\ov\mu)>0\qquad\text{and}\qquad \Xi_\mu(\ov\lambda,\ov\mu)<0.
\end{align}
\end{lemma}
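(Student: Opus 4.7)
The strategy is to differentiate the initial value problem \eqref{ERU} with respect to the parameters $\lambda$ and $\mu$, and to exploit the ODE together with the vanishing boundary relation at $p=0$ that holds at $(\ov\lambda,\ov\mu)$. Write $\zz=\zz(\cdot;\ov\lambda,\ov\mu)$ and recall from Lemma \ref{L:3} that $\zz$ solves $R_{\ov\lambda,\ov\mu}\zz=0$; an elementary sign argument (as in the proof of Lemma \ref{L:4}) gives $\zz,\zz'>0$ on $(p_0,0]$, and in particular $\zz(0)>0$. Observe also that $\bb(0)=\sqrt{\lambda}$ so that $\bb^3(0)=\lambda^{3/2}$, and that the top boundary identity $\Xi(\ov\lambda,\ov\mu)=0$ reads
\begin{equation*}
\ov\lambda^{3/2}\zz'(0)=(g+\sigma\ov\mu)\zz(0).
\end{equation*}

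For the sign of $\Xi_\mu$, set $\zz_\mu:=\p_\mu\zz$. Differentiating \eqref{ERU} in $\mu$ yields $(\bb^3\zz_\mu')'-\mu\bb\zz_\mu=\bb\zz$ with $\zz_\mu(p_0)=\zz_\mu'(p_0)=0$. Multiplying the $\zz$-equation by $\zz_\mu$, the $\zz_\mu$-equation by $\zz$, subtracting, and integrating by parts over $(p_0,0)$ produces
\begin{equation*}
\bb^3(0)\bigl[\zz(0)\zz_\mu'(0)-\zz'(0)\zz_\mu(0)\bigr]=\int_{p_0}^0\bb\zz^2\,dp.
\end{equation*}
Multiplying $\Xi_\mu=\ov\lambda^{3/2}\zz_\mu'(0)-(g+\sigma\ov\mu)\zz_\mu(0)-\sigma\zz(0)$ by $\zz(0)$ and eliminating $(g+\sigma\ov\mu)\zz(0)$ via the identity above yields
\begin{equation*}
\zz(0)\,\Xi_\mu(\ov\lambda,\ov\mu)=\int_{p_0}^0\bb\zz^2\,dp-\sigma\zz(0)^2.
\end{equation*}
Next, multiplying the ODE for $\zz$ by $\zz$ itself, integrating by parts, and using the top boundary relation gives the energy identity
\begin{equation*}
\ov\mu\Bigl(\sigma\zz(0)^2-\int_{p_0}^0\bb\zz^2\,dp\Bigr)=\int_{p_0}^0\bb^3\zz'^2\,dp-g\zz(0)^2.
\end{equation*}
It remains to prove that the right-hand side is strictly positive. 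By Cauchy--Schwarz applied to $\zz(0)=\int_{p_0}^0\bb^{-3/2}\cdot\bb^{3/2}\zz'\,dp$, we have
\begin{equation*}
\zz(0)^2\leq \int_{p_0}^0\bb^{-3}\,dp\cdot \int_{p_0}^0\bb^3\zz'^2\,dp,
\end{equation*}
and since $\ov\lambda\geq\lambda_0$ and $\lambda\mapsto\int_{p_0}^0\bb^{-3}(\cdot;\lambda)\,dp$ is strictly decreasing, \eqref{QU} gives $\int_{p_0}^0\bb^{-3}\,dp\leq 1/g$. Equality would force $\zz'=c\bb^{-3}$, hence $(\bb^3\zz')'=0$, contradicting $(\bb^3\zz')'=\ov\mu\bb\zz>0$ for $\ov\mu>0$. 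Thus $\int\bb^3\zz'^2>g\zz(0)^2$, so the bracket in the energy identity is positive, and $\Xi_\mu(\ov\lambda,\ov\mu)<0$.

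For the sign of $\Xi_\lambda$, set $\zz_\lambda:=\p_\lambda\zz$. Using $\p_\lambda\bb^3=3\bb/2$ and $\p_\lambda\bb=1/(2\bb)$, one finds
\begin{equation*}
(\bb^3\zz_\lambda')'-\mu\bb\zz_\lambda=-\bigl(\tfrac{3\bb}{2}\zz'\bigr)'+\tfrac{\mu}{2\bb}\zz,
\end{equation*}
with $\zz_\lambda(p_0)=\zz_\lambda'(p_0)=0$. Performing the same cross-multiplication trick with $\zz$ and $\zz_\lambda$, integrating by parts twice (once for the Wronskian-type term, once to remove the derivative on the right), and again replacing $(g+\sigma\ov\mu)\zz(0)$ by $\ov\lambda^{3/2}\zz'(0)$, the boundary terms $\frac{3}{2}\sqrt{\ov\lambda}\,\zz'(0)\zz(0)$ cancel against the contribution coming from the explicit $\ov\lambda$-dependence $\frac{3}{2}\ov\lambda^{1/2}\zz'(0)$ appearing in $\Xi_\lambda$, and one is left with
\begin{equation*}
\zz(0)\,\Xi_\lambda(\ov\lambda,\ov\mu)=\tfrac{3}{2}\int_{p_0}^0\bb\,\zz'^2\,dp+\tfrac{\ov\mu}{2}\int_{p_0}^0\bb^{-1}\zz^2\,dp,
\end{equation*}
which is manifestly positive. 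Dividing by $\zz(0)>0$ concludes the proof.

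The main obstacle is the first inequality: the representation of $\Xi_\mu$ alone gives a difference of positive quantities, and its sign is only revealed by combining the perturbation identity with the energy identity and then invoking the sharp Cauchy--Schwarz bound at $\lambda_0$; ruling out equality for $\ov\mu>0$ is what makes the inequality strict. The computation for $\Xi_\lambda$ is bookkeeping by comparison.
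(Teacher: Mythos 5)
Your argument is correct and follows essentially the same route as the paper: differentiate the initial value problem \eqref{ERU} in $\mu$ and $\lambda$, cross-multiply with $\zz$ and integrate by parts to get the Wronskian-type identities, use the boundary relation $\ov\lambda^{3/2}\zz'(0)=(g+\sigma\ov\mu)\zz(0)$, and combine the energy identity with the Cauchy--Schwarz bound and \eqref{QU} for the sign of $\Xi_\mu$, while the $\Xi_\lambda$ computation yields the same manifestly positive expression $\frac{1}{\zz(0)}\int_{p_0}^0\bigl(\tfrac{3\bb}{2}\zz'^2+\tfrac{\ov\mu}{2\bb}\zz^2\bigr)dp$. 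Your explicit ruling out of equality in Cauchy--Schwarz (via $(\bb^3\zz')'=\ov\mu\bb\zz>0$) is a welcome refinement that makes the strictness of $\Xi_\mu<0$ fully transparent, a point the paper passes over quickly.
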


Before proving the lemma, let us observe that since $\Xi(\lambda,0)>0$ for all $\lambda>\lambda_0,$  the second relation of  \eqref{slim} ensures   additionally to \eqref{dm} that
\begin{equation}\label{dm2}
\begin{aligned}
&\text{$\Xi(\lambda,\mu)>0$ for all $\mu\in[0,\mu(\lambda)).$}
\end{aligned}
\end{equation}
Particularly, if $\lambda>\lambda_0,$ then  $\Xi(\lambda,\mu)=0$ if and only if $\mu=\mu(\lambda).$
 
\begin{proof}[Proof of Lemma \ref{L:5}]
 Because $\Xi(\ov\lambda,\ov\mu)=0, $ it follows from the Lemmas \ref{L:2}-\ref{L:3}, and the  discussion   following them, that 
 $\ke R_{\ov\lambda,\ov\mu} $  is spanned by the function $\zz$ defined by \eqref{ERU} when $(\lambda,\mu)=(\ov\lambda,\ov\mu)$.
 Thus, $\zz$ solves the following system of equations 
 \begin{equation}\label{bb}
\left\{\begin{array}{lll}
  (\bb^3 \zz')'-\ov\mu \bb\zz=0\qquad \text{in $(p_0,0)$},\\[1ex]
  \zz(p_0)=0,\quad \zz'(p_0)=1,\\[1ex]
  (g+\sigma\mu)\zz(0)-\lambda^{3/2}\zz'(0)=0.
 \end{array}
 \right.
\end{equation}
Differentiating the equations of \eqref{ERU} with respect to $\mu$ shows that the Fr\'echet derivative $\zz_\mu:=\zz_\mu(\cdot,\ov\lambda,\ov\mu)$ is the solution of the problem
 \begin{equation}\label{bb1}
\left\{\begin{array}{lll}
  (\bb^3 \zz_\mu')'-\ov\mu \bb\zz_\mu=\bb\zz\qquad \text{in $(p_0,0)$},\\[1ex]
  \zz_\mu(p_0)=0,\quad \zz'_\mu(p_0)=0.
 \end{array}
 \right.
\end{equation}
First, we establish that
\begin{align}\label{De1}
 \Xi_\mu(\ov\lambda,\ov\mu)=\ov\lambda^{3/2}\zz_\mu'(0)-\sigma\zz(0)-(g+\sigma\ov\mu)\zz_\mu(0)<0.
\end{align}
Multiplying the first equation of \eqref{bb} by $\zz_\mu$ and the first equation of \eqref{bb1} by $\zz,$ we obtain after integrating by parts that
\begin{equation}\label{fc1}
\zz(0)\left(\ov\lambda^{3/2}\zz_\mu'(0)-(g+\sigma\ov\mu)\zz_\mu(0)\right)=\int_{p_0}^0\bb\zz^2\, dp.
\end{equation}
Moreover, if we multiply the first equation of \eqref{bb} by $\zz$ and integrate it by parts we find  that
\[
\int_{p_0}^0\bb^3\zz'^2\, dp-g\zz^2(0)=\ov\mu \left(\sigma\zz^2(0)-\int_{p_0}^0\bb\zz^2\, dp\right).
\]
But, since $\lambda\geq\lambda_0,$ 
 \begin{align*}
  g\zz^2(0)=&g\left(\int_{p_0}^0(\bb^{3/2}\zz')\frac{1}{\bb^{3/2}}\, dp\right)^2\leq g\int_{p_0}^0\bb^{3}\zz'^2\, dp\int_{p_0}^0\frac{1}{\bb^{3}}\, dp\leq \int_{p_0}^0\bb^{3}\zz'^2\, dp,
 \end{align*}
cf. \eqref{QU}, which implies that the right-hand side of \eqref{fc1} is bounded from above by $\sigma\zz^2(0).$
This proves \eqref{De1}.

For the first claim of \eqref{De1} we note that 
\begin{align*}
 \Xi_\lambda(\ov\lambda,\ov\mu)=\ov\lambda^{3/2}\zz_\lambda'(0)+\frac{3}{2}\ov\lambda^{1/2}\zz'(0)-(g+\sigma\ov\mu)\zz_\lambda(0),
\end{align*}
whereby $\zz_\lambda$ is the solution of 
\begin{equation}\label{bb2}
\left\{\begin{array}{lll}
  (\bb^3 \zz_\lambda')'-\ov \mu \bb\zz_\lambda=-(3\bb^2\bb_\lambda\zz')+\ov\mu\bb_\lambda\zz\qquad \text{in $(p_0,0)$},\\[1ex]
  \zz_\lambda(p_0)=0,\quad \zz'_\lambda(p_0)=0,
 \end{array}
 \right.
\end{equation}
and $\bb_\lambda=1/(2\bb).$
Similarly as before, we multiply the first equation of \eqref{bb} by $\zz_\lambda$ and the first equation of \eqref{bb2} by $\zz$ to obtain, after integrating by parts, that
\[
\Xi_\lambda(\ov\lambda,\ov\mu)=\frac{1}{\zz(0)}\int_{p_0}^0\left(\frac{3\bb}{2}\zz'^2+\frac{\ov\mu}{2\bb}\zz^2\right)\, dp>0.
\]
This completes our argument.
 \end{proof}
 
 \begin{rem}\label{R:1}
 The restriction $\mu>0$ in Lemma \ref{L:5} was needed just to prove the second claim of \eqref{slim}.
 But, when  $\mu=0$ and $\lambda_0,$ we obtain easily from \eqref{De1}, \eqref{fc1}, \eqref{QU}, and the explicit expression for $\zz(\cdot\lambda_0,0)$
 that $\Xi_\mu(\lambda_0,0)\leq0$ if and only if
 \begin{equation}\label{d2}
   \int_{p_0}^0\bb(p)\left(\int_{p_0}^p\frac{1}{\bb^3(s)}\, ds\right)^2\, dp\leq\frac{\sigma}{g^2}.
 \end{equation}
 \end{rem}

Combining the previous lemmas, we obtain the following result.
\begin{lemma}\label{L:6} The function 
 \[
 [\lambda_0,\infty)\ni\lambda\mapsto\mu(\lambda)\in(0,\infty)
 \]
 is continuous,  real-analytic in $(\lambda_0,\infty)$,   and strictly increasing.
\end{lemma}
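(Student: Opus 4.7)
The strategy is the analytic implicit function theorem applied to $\Xi(\lambda,\mu)=0$. Real-analyticity of $\Xi$ was observed after \eqref{DEF}, and Lemma \ref{L:5} supplies the nondegeneracy $\Xi_\mu(\lambda,\mu(\lambda))<0$ whenever $\mu(\lambda)>0$. This yields real-analyticity of $\mu$ on $(\lambda_0,\infty)$ together with strict monotonicity via implicit differentiation; continuity at the endpoint $\lambda_0$ requires an extra monotonicity/uniqueness argument, since in the borderline case of \eqref{d2} (see Remark \ref{R:1}) the point $(\lambda_0,0)$ may be degenerate.

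Fix $\lambda_*\in(\lambda_0,\infty)$. Lemma \ref{L:4}(i) gives $\Xi(\lambda_*,0)>0$, hence $\mu(\lambda_*)>0$, and Lemma \ref{L:5} then gives $\Xi_\mu(\lambda_*,\mu(\lambda_*))<0$. The analytic implicit function theorem produces a neighborhood $I\subset(\lambda_0,\infty)$ of $\lambda_*$ and a real-analytic $\tilde\mu\colon I\to(0,\infty)$ with $\Xi(\lambda,\tilde\mu(\lambda))=0$ and $\tilde\mu(\lambda_*)=\mu(\lambda_*)$. For every $\lambda\in(\lambda_0,\infty)$ the value $\mu(\lambda)$ is the unique positive zero of $\Xi(\lambda,\cdot)$: by Lemma \ref{L:4} we have $\Xi(\lambda,0)>0$ and $\Xi(\lambda,\mu)\to-\infty$, so at least one positive zero exists, and Lemma \ref{L:5} forces $\Xi_\mu<0$ at any such zero, preventing $\Xi$ from returning to zero after it has turned negative. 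Hence $\tilde\mu\equiv\mu$ on $I$, proving real-analyticity on $(\lambda_0,\infty)$. Implicit differentiation together with \eqref{slim} then gives
\begin{equation*}
\mu'(\lambda)=-\frac{\Xi_\lambda(\lambda,\mu(\lambda))}{\Xi_\mu(\lambda,\mu(\lambda))}>0,
\end{equation*}
so $\mu$ is strictly increasing on $(\lambda_0,\infty)$.

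For continuity at $\lambda_0$, monotonicity ensures that $\mu_*:=\lim_{\lambda\downarrow\lambda_0}\mu(\lambda)$ exists in $[0,\infty)$, and continuity of $\Xi$ forces $\Xi(\lambda_0,\mu_*)=0$. The defining property \eqref{dm}, $\Xi(\lambda_0,\mu)<0$ for all $\mu>\mu(\lambda_0)$, rules out $\mu_*>\mu(\lambda_0)$; and $\mu_*<\mu(\lambda_0)$ is impossible, for when $\mu(\lambda_0)>0$ the implicit function theorem applied at $(\lambda_0,\mu(\lambda_0))$ produces values of $\mu(\lambda)$ close to $\mu(\lambda_0)$ for $\lambda$ just above $\lambda_0$, while when $\mu(\lambda_0)=0$ the inequality $\mu(\lambda)>\mu(\lambda_0)$ for $\lambda>\lambda_0$ is automatic. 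Hence $\mu_*=\mu(\lambda_0)$, and strict monotonicity extends to $[\lambda_0,\infty)$ because any equality $\mu(\lambda)=\mu(\lambda_0)$ with $\lambda>\lambda_0$ would contradict strict monotonicity on $(\lambda_0,\lambda]$ combined with continuity at $\lambda_0$. The main obstacle is precisely the borderline regime of \eqref{d2}, where $\mu(\lambda_0)=0$ and $\Xi_\mu(\lambda_0,0)\leq 0$ may occur, so the implicit function theorem is unavailable at the endpoint; the argument above sidesteps this by relying on monotonicity and the global sign information in \eqref{dm} rather than local nondegeneracy of $\Xi$ at $(\lambda_0,0)$.
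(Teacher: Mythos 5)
Your proof is correct and follows essentially the same route as the paper: the analytic implicit function theorem at a zero of $\Xi$, implicit differentiation combined with the signs in \eqref{slim} for strict monotonicity, and identification of the local branch with $\mu$ via the uniqueness of the zero encoded in \eqref{dm}--\eqref{dm2}. The only difference is that you spell out the continuity at the endpoint $\lambda_0$ (including the possibly degenerate case $\mu(\lambda_0)=0$ of Remark \ref{R:1}), which the paper leaves implicit in its closing appeal to \eqref{dm}--\eqref{dm2}.
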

\begin{proof}
 Let $\lambda_1>\lambda_0.$ 
 Because of $\Xi(\lambda_1,\mu(\lambda_1))=0$ and  $\Xi_\mu(\lambda_1,\mu(\lambda_1))<0$ there exists a real-analytic function $\ov\mu$ such that $\ov\mu(\lambda_1)=\mu(\lambda_1)$ 
 and $\Xi(\lambda,\ov\mu(\lambda))=0$ for all $ \lambda$ close to $\lambda_1.$
  Recalling  \eqref{slim}, we see that the function $\ov\mu$ is strictly increasing, as we have
 \[
 \Xi_\lambda(\lambda,\ov\mu(\lambda))+ \Xi_\mu(\lambda,\ov\mu(\lambda)) \ov\mu'(\lambda)=0
 \]
The conclusion follows now from the relations \eqref{dm}-\eqref{dm2}. 
\end{proof}

The next lemma ensures that the function $\mu:[\lambda_0,\infty)\to[\mu(\lambda_0),\infty)$ is bijective.

\begin{lemma}\label{L:7}
 We have that
 \begin{equation}\label{mil}
  \lim_{\lambda\to\infty}\frac{\mu(\lambda)}{\lambda}=\infty.
 \end{equation}
\end{lemma}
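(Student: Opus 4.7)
The plan is to show that for every fixed $M>0$ one has $\Xi(\lambda,M\lambda)>0$ for all sufficiently large $\lambda$. Once this is established, the characterization of $\mu(\lambda)$ recorded in \eqref{dm} and \eqref{dm2} immediately forces $\mu(\lambda)>M\lambda$ for all large $\lambda$, and since $M$ is arbitrary, \eqref{mil} follows.

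To analyze $\Xi(\lambda,M\lambda)$ asymptotically, I would rescale the coefficient function by setting $\wt\bb(p;\lambda):=\bb(p;\lambda)/\sqrt{\lambda}=\sqrt{1-2\Gamma(p)/\lambda}$, which converges uniformly to the constant $1$ on $[p_0,0]$ as $\lambda\to\infty$. The initial value problem \eqref{ERU} for $\zz(\cdot;\lambda,M\lambda)$ can then be rewritten as
\[
(\wt\bb^3\zz')'-M\wt\bb\zz=0\quad\text{in $(p_0,0)$},\qquad\zz(p_0)=0,\quad\zz'(p_0)=1.
\]
By continuous dependence for linear ODEs with uniformly bounded coefficients that are bounded away from zero, $\zz(\cdot;\lambda,M\lambda)$ converges in $C^1([p_0,0])$, as $\lambda\to\infty$, to the solution $\zz_\infty$ of the constant-coefficient limit problem $\zz_\infty''-M\zz_\infty=0$, $\zz_\infty(p_0)=0$, $\zz_\infty'(p_0)=1$, that is $\zz_\infty(p)=\sinh(\sqrt{M}(p-p_0))/\sqrt{M}$. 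In particular,
\[
\zz(0;\lambda,M\lambda)\longrightarrow\frac{\sinh(\sqrt{M}|p_0|)}{\sqrt{M}},\qquad \zz'(0;\lambda,M\lambda)\longrightarrow\cosh(\sqrt{M}|p_0|).
\]

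Substituting these into \eqref{DEF} after dividing by $\lambda^{3/2}$ yields
\[
\frac{\Xi(\lambda,M\lambda)}{\lambda^{3/2}}=\zz'(0;\lambda,M\lambda)-\left(\frac{g}{\lambda^{3/2}}+\frac{\sigma M}{\sqrt{\lambda}}\right)\zz(0;\lambda,M\lambda).
\]
The parenthesized factor vanishes as $\lambda\to\infty$, so the right-hand side converges to $\cosh(\sqrt{M}|p_0|)>0$, and $\Xi(\lambda,M\lambda)>0$ for $\lambda$ large, which is what was needed.

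The only delicate point will be justifying the $C^1([p_0,0])$-convergence $\zz(\cdot;\lambda,M\lambda)\to\zz_\infty$, because $\gamma$ has a jump at $p_1$ so $\wt\bb'$ is discontinuous there. I would handle this by recasting the second order equation as a first order linear system for $(\zz,\wt\bb^3\zz')$, whose $L^\infty$ matrix coefficients are uniformly bounded and converge in $L^\infty([p_0,0])$ to those of the limit system; Gronwall's inequality applied to the Duhamel representation then yields uniform convergence of both $\zz$ and $\zz'$ on $[p_0,0]$ despite the jump of the coefficients at $p_1$.
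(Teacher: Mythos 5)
Your proposal is correct, and it reaches the conclusion by a somewhat different route than the paper. The paper argues by contradiction: assuming $0<\mu(\lambda_n)/\lambda_n\leq K$ along a sequence $\lambda_n\to\infty$, it uses the Volterra-type integral representation \eqref{nnn} of $\zz$ together with Gronwall's inequality to see that the kernel functions $\zz_n$ stay bounded in $C([p_0,0])$, whence $\Xi(\lambda_n,\mu(\lambda_n))\geq \bb^3(p_0)-(g+\sigma\mu(\lambda_n))\zz_n(0)\to\infty$ (the first term grows like $\lambda_n^{3/2}$, the second only like $\lambda_n$), contradicting $\Xi(\lambda_n,\mu(\lambda_n))=0$ from \eqref{dm}. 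You instead fix $M>0$, rescale by $\wt\bb=\bb/\sqrt{\lambda}$, and use continuous dependence for the first-order system in $(\zz,\wt\bb^3\zz')$ to identify the exact limit $\Xi(\lambda,M\lambda)/\lambda^{3/2}\to\cosh(\sqrt{M}\,|p_0|)>0$, then invoke the sign characterization in \eqref{dm} to get $\mu(\lambda)\geq M\lambda$ for large $\lambda$. The underlying mechanism is the same in both arguments — when $\mu=O(\lambda)$ the value $\zz(0;\lambda,\mu)$ remains bounded, so the term $\lambda^{3/2}\zz'(0)$ in \eqref{DEF} dominates — but the paper's Gronwall bound is lighter, needing no limit identification, while your direct argument is quantitatively sharper and avoids the contradiction format at the cost of the continuous-dependence step. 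Your worry about the jump at $p_1$ is handled correctly by passing to the system for $(\zz,\wt\bb^3\zz')$; note in fact that $\wt\bb=\sqrt{1-2\Gamma/\lambda}$ is continuous (only $\wt\bb'$ jumps, since $\Gamma$ is the continuous antiderivative of $\gamma$), so the coefficients $\wt\bb^{-3}$ and $M\wt\bb$ of your system are continuous and converge uniformly to $1$ and $M$, making that step routine.
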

\begin{proof}
 Assume by contradiction that there exists a sequence $\lambda_n\to\infty $ and a constant $K>0$  such that $0<\mu(\lambda_n)/\lambda_n\leq K$ for all $n\in\N.$
 For every $n\in\N,$ we denote by $\zz_n$ the function that spans the kernel of $R_{\lambda_n,\mu(\lambda_n)} $ and solves the first system of \eqref{ERU}.
 Then it follows readily  form \eqref{nnn} that there exists a constant $\wt K$, independent of $n$, such that
 \[
 0\leq \zz_n(p)\leq \wt K\left(1+\int_{p_0}^p\zz_n(s)\, ds\right)\qquad\text{for all $p\in[p_0,0]$ and $n\in\N.$}
 \]
 Using Gronwall's inequality, we conclude that the sequence $(\zz_n)_n\subset C([p_0,0])$ is bounded. 
 Since by \eqref{DEF}, \eqref{nnn}, and our assumption we have
 \[
 \Xi(\lambda_n,\mu(\lambda_n))\geq \bb^3(p_0)-(g+\sigma\mu(\lambda_n))\zz_n(0)\to_{n\to\infty}\infty,
 \]
 we obtain a contradiction with the properties defining the map $\mu(\cdot),$ cf. \eqref{dm}. 
\end{proof}

\section{Proof of the main result}\label{Sec:5}

\paragraph{\bf The case $N=2$.}
We  now come back to the setting presented in Theorem \ref{MT} and assume that $N=2$.
We summarize  from the Lemmas \ref{L:4}-\ref{L:7} that there exists a smallest integer
 $n\in\N\setminus\{0\}$ such that for all $k\in\N\setminus\{0\},$ there exists a unique constant $ \lambda_k\in(\lambda_0,\infty)$ with the property that 
\begin{align}\label{LP}
 \mu(\lambda_k):=(kn)^2.
\end{align}
Because $\lambda_k>\lambda_0,$ $\lambda_0$ being the unique solution of \eqref{QU}, Lemma \ref{L:6} ensures that the sequence $(\lambda_k)_k$ is strictly increasing. 
Whence, we have that $F(\lambda_k,(ln)^2)= 0$ if and only if  $k=l.$
Moreover, it follows from the  Remark \ref{R:1} and \eqref{dm}-\eqref{dm2} that the integer $n$  can be chosen to be $n=1$ if \eqref{d2} is satisfied.
This is due to the fact that  $\mu(\lambda_0)=0$ if \eqref{d2} holds true. 

Since we want to determine nontrivial solutions of the water wave problem \eqref{PB} we  study the existence of bifurcation branches consisting of solutions of \eqref{BP} that arise from $(\lambda_k,0),$ with $k\geq1.$
In the following we denote by $\wh X, \wh Y,$ and  $\wh Z$ the subspaces of $X, Y,$ and $ Z,$ respectively, that consist only of $(2\pi/(kn))$-periodic  and even functions in the variable $q$.
Then, it follows readily from the definition of $(\cF,\cG)$ that we have
\begin{align}\label{BP0'}
 (\cF,\cG)\in C^\omega((2\max_{[p_0,0]}\Gamma,\infty)\times \wh X, \wh Y\times \wh Z).
\end{align}
Moreover, the arguments used in the proof of Theorem \ref{T:1} show that the Fr\' echet derivative $\p_{(v,V)}(\cF,\cG)(\lambda,0)\in\kL(\wh X, \wh Y\times \wh Z)$
is a Fredholm operator of index zero for every  value $\lambda\in (2\max_{[p_0,0]}\Gamma,\infty)$.
Taking into account that the kernel of $\p_{(v,V)}(\cF,\cG)(\lambda,0)$  is finite dimensional, 
by the choice of the sequence $(\lambda_k)_k,$ we know that $\p_{(v,V)}(\cF,\cG)(\lambda_k,0)$ has a one-dimension kernel.
More precisely, 
\[
\ke\p_{(v,V)}(\cF,\cG)(\lambda_k,0)=\spa\{(v^0,V^0):=(v_k^0(p)\cos(knq), V_k^0(p)\cos(knq))\}
\]
whereby $(v_k^0,V_k^0)$ defines, cf. \eqref{eq:DE}, a vector $\vv^0\in H^2((p_0,0))$ that spans the one-dimensional kernel of  the operator $R_{\lambda_k,(kn)^2}$.
This vector $\vv^0$ is colinear to the solutions of  both initial value problems \eqref{ERU}.

In order to apply the bifurcation theorem of Crandall and Rabinowitz to the equation $F(\lambda,(v,V))=0$ in $\wh Y\times\wh Z$, which will give us, via Lemma \ref{L:1},
 the desired result from Theorem \ref{MT}, we are left check that
\begin{align}\label{TC}
 \p_{\lambda (v,V)}(\cF,\cG)(\lambda_k,0)(v^0,V^0)\notin \im \p_{(v,V)}(\cF,\cG)(\lambda_k,0)
\end{align}
if $0\neq (v^0,V^0)\in \ke\p_{(v,V)}(\cF,\cG)(\lambda_k,0)$.
To this end, we need to   characterize the range $\im \p_{(v,V)}(\cF,\cG)(\lambda_k,0).$
\begin{lemma}\label{L:R} Given   $k\geq1$, the vector $((f,F), \varphi)\in\wh Y\times\wh Z$ belongs to $ \im \p_{(v,V)}(\cF,\cG)(\lambda_k,0)$ if and only if  we have
 \begin{equation}\label{RN}
  \int_{\0_1}a^3v^0f\, d(q,p)+  \int_{\0_2}A^3V^0F\, d(q,p)+\int_{\s\times\{0\}}\frac{A^2V^0\varphi}{2}\, dq=0
 \end{equation}
 for all $(v^0,V^0)\in \ke\p_{(v,V)}(\cF,\cG)(\lambda_k,0).$
\end{lemma}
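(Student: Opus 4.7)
The approach I would take relies on an integration by parts argument, exploiting the fact that the principal part of the linearized operator can be written in divergence form. Recall from \eqref{L3} that, with $a := 1/\uu'$ and $A := 1/\zu'$ (both equal to $\bb = \sqrt{\lambda-2\Gamma}$, hence continuous at $p_1$ with $a(p_1)=A(p_1)=\bb(p_1)$ and $A(0)=\sqrt{\lambda}$), we have the identities $a^{3}L_1 v = (a^{3} v_p)_p + (a v_q)_q$ and $A^{3}L_2 V = (A^{3} V_p)_p + (A V_q)_q$. The strategy has two parts: first establish necessity by direct integration by parts against a kernel element, then use the Fredholm index-zero property from Theorem \ref{T:1} to upgrade this to a full characterization.

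\textbf{Necessity.} Suppose $(v,V)\in \wh X$ satisfies $\partial_{(v,V)}(\cF,\cG)(\lambda_k,0)(v,V)=((f,F),\varphi)$ and let $(v^0,V^0)\in\ke\partial_{(v,V)}(\cF,\cG)(\lambda_k,0)$. I would multiply $a^3 L_1 v = a^3 f$ by $v^0$, integrate over $\0_1$, and integrate by parts twice. Since $L_1 v^0=0$ in divergence form as well, the bulk term involving $v^0$ and $v$ integrated against $L_1 v^0$ vanishes, leaving only boundary contributions on $p=p_0$ and $p=p_1$ (the $q$-boundary terms drop out by periodicity). The $p_0$-contribution vanishes because $v|_{p=p_0}=v^0|_{p=p_0}=0$. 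Performing the analogous computation in $\0_2$ with $V^0$ and adding, the $p_1$ boundary contributions from the two domains cancel exactly: the cross-terms have opposite signs and the transmission conditions $v=V$, $v_p=V_p$, $v^0=V^0$, $v^0_p=V^0_p$ at $p=p_1$ (together with continuity of $a=A=\bb$ there) make them equal in absolute value. What remains is the $p=0$ boundary term
\[
\int_{\s} A^3(0)\left[V^0 V_p - V^0_p V\right]_{p=0}\, dq.
\]
Using $A^3(0)=\lambda^{3/2}$, the boundary conditions $TV=\varphi$ and $TV^0=0$ in the form \eqref{L2} to express $V_p$ and $V^0_p$ at $p=0$, and integrating the $\sigma V_{qq}$ and $\sigma V^0_{qq}$ terms by parts in $q$ so that they cancel, this boundary integral reduces precisely to $-\int_{\s\times\{0\}}\tfrac{A^2 V^0\varphi}{2}\,dq$, which is \eqref{RN}.

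\textbf{Sufficiency.} By Theorem \ref{T:1}, the operator $\partial_{(v,V)}(\cF,\cG)(\lambda_k,0)\in\kL(\wh X,\wh Y\times\wh Z)$ is Fredholm of index zero, and by the construction in the paragraph preceding the lemma its kernel is one-dimensional. Hence its range has codimension one in $\wh Y\times\wh Z$, so the annihilator of the range is a one-dimensional subspace of $(\wh Y\times\wh Z)^{*}$. The left-hand side of \eqref{RN}, regarded as a linear functional of $((f,F),\varphi)\in\wh Y\times\wh Z$ with $(v^0,V^0)$ fixed and nonzero in the kernel, is continuous and, by the necessity direction, annihilates the range. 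To conclude it is not the zero functional (and therefore generates the whole annihilator), I would test it on $((f,F),\varphi)=(a^3 v^0, A^3 V^0, 0)$, which gives $\int_{\0_1} a^6 (v^0)^2\,d(q,p)+\int_{\0_2} A^6 (V^0)^2\,d(q,p)>0$ since $(v^0,V^0)\not\equiv 0$ and $a,A>0$. Hence \eqref{RN} characterizes the range.

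The main obstacle I anticipate is bookkeeping the boundary terms carefully: verifying that the contributions at $p=p_1$ from $\0_1$ and $\0_2$ cancel requires matching the outward normals, using all four transmission relations, and using continuity of the coefficient $\bb$ across $p_1$; and at the top, one must use $A^3(0)=\lambda^{3/2}$ to obtain the correct coefficient in front of $\varphi$ (namely $A^2/2$) after dividing by $\lambda^{3/2}$ implicit in the formula \eqref{L2} for $T$.
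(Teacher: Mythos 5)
Your proposal is correct and follows essentially the same route as the paper: necessity via integration by parts using the divergence-form identity \eqref{L3}, the transmission conditions at $p=p_1$, and the boundary operator in the form \eqref{L2} with $A(0)=\sqrt{\lambda}$; sufficiency via the codimension-one argument coming from the Fredholm index-zero property and the one-dimensional kernel. Your explicit test element $((a^3v^0,A^3V^0),0)$ showing the functional is nontrivial is a small elaboration of a step the paper leaves implicit, not a different method.
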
 
\begin{proof}
We pick  $0\neq (v^0,V^0)\in \ke\p_{(v,V)}(\cF,\cG)(\lambda_k,0)$, and    presuppose  that there exists a pair $(v,V)\in\wh X$  with the property that $ \p_{(v,V)}(\cF,\cG)(\lambda_k,0)(v,V)=((f,F), \varphi)$, that is
 \begin{equation} \label{VVV} 
 L_1v=f\quad \text{in $\wh Y_1$},\qquad L_2V=F\quad \text{in $\wh Y_2$},\qquad TV=\varphi\quad \text{in $\wh Z$},
 \end{equation}
 whereby $L_1, L_2, T$ are given by \eqref{L1} and \eqref{L2}.
 Invoking \eqref{L3}, we use integration and the fact that   $(v^0,V^0)\in\wh X$ solves \eqref{VVV} when  $((f,F), \varphi)=0, $  to find 
 \begin{align*}
  &\int_{\0_1}a^3v^0f\, d(q,p)+  \int_{\0_2}A^3V^0F\, d(q,p)+\int_{\s\times\{0\}}\frac{A^2V^0\varphi}{2}\, dq\\
  &=-\int_{\0_1}(a^3vv_p^0+av_qv_q^0)\, d(q,p)-\int_{\0_2}(A^3VV_p^0+AV_qV_q^0)\, d(q,p)\\
  &\phantom{= \, \,}+\int_{\s\times\{0\}}(g+\sigma(kn)^2)V^0V\, dq=0,
 \end{align*}
the last equality being obtain by using once more the fact that $(v^0,V^0)\in\wh X$ solves \eqref{VVV} when  $((f,F), \varphi)=0.$
Taking into account that the relation \eqref{RN} defines a closed subspace of $ \wh Y\times\wh Z$ which    has codimension one and contains the image $\im \p_{(v,V)}(\cF,\cG)(\lambda_k,0)$, which has itself codimension one,
we obtain the desired claim.
\end{proof}

\begin{lemma}\label{L:TC} Let $k\in\N$ with $k\geq 1$ be given.
The transversality condition \eqref{TC} holds true for all  vectors $0\neq (v^0,V^0)\in \ke\p_{(v,V)}(\cF,\cG)(\lambda_k,0)$.
\end{lemma}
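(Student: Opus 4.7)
The plan is to invoke Lemma \ref{L:R}: condition \eqref{TC} fails precisely when the pairing \eqref{RN} vanishes for $((f,F), \varphi) := \p_{\lambda(v,V)}(\cF,\cG)(\lambda_k, 0)(v^0, V^0)$, so the entire task reduces to computing this pairing in closed form and verifying that it does not vanish.

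First I would compute the mixed derivative explicitly. Using $\uu'(\cdot;\lambda) = \zu'(\cdot;\lambda) = 1/\bb$ and $\bb_\lambda = 1/(2\bb)$, together with the fact that $L_i v^0 = 0$ and the divergence form from \eqref{L3}, a short computation yields
\[
\bb^3 (\p_\lambda L_i) v^0 = \Bigl(\tfrac{3\bb}{2} v^0_p\Bigr)_p + \Bigl(\tfrac{1}{2\bb} v^0_q\Bigr)_q \qquad\text{in $\0_i$}, \quad i = 1, 2.
\]
For the Venttsel part, differentiating \eqref{L2} and then using the kernel relation $TV^0 = 0$ to eliminate the zeroth- and second-order terms at $p=0$ collapses the $\lambda$-derivative to a single first-order expression,
\[
\p_\lambda T V^0 = -3 \lambda_k^{-1/2} V^0_p\big|_{p=0}.
\]

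Next I would substitute these into \eqref{RN} (noting that $a = A = \bb$ on the respective strips) and integrate by parts in $\0_1$ and $\0_2$ separately. Three boundary contributions appear and have to be handled in concert: the bottom boundary at $p = p_0$ gives nothing by the Dirichlet condition built into $X$; the interior interface at $p = p_1$ produces two terms from $\0_1$ and $\0_2$ that cancel exactly because of the transmission conditions $v^0 = V^0$ and $v^0_p = V^0_p$ built into $X$; and at $p = 0$, the boundary term produced by the $\cF$-contribution is canceled, after using $\bb^2(0) = \lambda_k$, by the $\varphi$-contribution from the third integral in \eqref{RN}. Writing $\vv^0 \in H$ for the one-dimensional profile \eqref{eq:DE} spanning $\ke R_{\lambda_k, (kn)^2}$, what survives after also integrating in $q$ (which contributes a factor $\pi = \int_\s \cos^2(knq)\, dq$) is
\[
-\pi \int_{p_0}^0 \Bigl[\tfrac{3\bb}{2}((\vv^0)')^2 + \tfrac{(kn)^2}{2\bb}(\vv^0)^2\Bigr] dp.
\]

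Since $\vv^0 \not\equiv 0$ and $\vv^0(p_0) = 0$ forbids $\vv^0$ from being a nonzero constant, the integrand is a nonnegative function with strictly positive total mass; with $\bb > 0$ on $[p_0, 0]$ and $kn \geq 1$, the pairing is strictly negative, hence nonzero, and Lemma \ref{L:R} yields \eqref{TC}. The main obstacle is arranging the integration by parts so that the three potentially awkward boundary contributions cancel simultaneously: the interface cancellation at $p = p_1$ rests on the $C^1$-matching built into $X$, while the cancellation at $p = 0$ depends crucially on having used both the bulk equation $L_i v^0 = 0$ (to recast $\bb^3 \p_\lambda L_i v^0$ in divergence form) and the boundary equation $TV^0 = 0$ (to collapse $\p_\lambda T V^0$ to a first-order expression).
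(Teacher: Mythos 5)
Your proposal is correct and follows essentially the same route as the paper: reduce via Lemma \ref{L:R}, compute $\p_{\lambda(v,V)}(\cF,\cG)(\lambda_k,0)(v^0,V^0)$, and integrate by parts (with the interface terms cancelling by the transmission conditions and the $p=0$ terms cancelling against the $\varphi$-integral) to land on the same strictly negative quadratic quantity $-\int_{\0_1}\bigl(\tfrac{3a}{2}(v_p^0)^2+\tfrac{1}{2a}(v_q^0)^2\bigr)\,d(q,p)-\int_{\0_2}\bigl(\tfrac{3A}{2}(V_p^0)^2+\tfrac{1}{2A}(V_q^0)^2\bigr)\,d(q,p)$. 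Your only deviation is a cosmetic streamlining: you recast $\bb^3\p_\lambda L_i$ directly in divergence form using $L_iv^0=0$, whereas the paper reaches the same expression by a two-step integration by parts; both computations agree.
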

\begin{proof}
 Let $0\neq (v^0,V^0)\in \ke\p_{(v,V)}(\cF,\cG)(\lambda_k,0)$ be given.
 Then, we infer from \eqref{L1}, \eqref{L2}, and  \eqref{L3}, that
 \begin{align*}
  \p_{\lambda (v,V)}\cF_1(\lambda_k,0)(v^0,V^0)&=-\frac{2a_\lambda}{a^3}v^0_{qq}+\frac{3aa'_\lambda-3a_\lambda a'}{a^2}v^0_p=:f,\\
  \p_{\lambda (v,V)}\cF_2(\lambda_k,0)(v^0,V^0)&=-\frac{2A_\lambda}{A^3}V^0_{qq}+\frac{3AA'_\lambda-3A_\lambda A'}{A^2}V^0_p=:F,\\
  \p_{\lambda (v,V)}\cG(\lambda_k,0)(v^0,V^0)&=-\left[\frac{3}{A}V^0_p\right]\big|_{p=0}=:\varphi.
 \end{align*}
Our claim is equivalent to showing that $((f,F), \varphi)$ does not satisfy \eqref{RN}. 
Observing that  $a_\lambda= 1/(2a) $ and $A_\lambda= 1/(2A)$,  we compute  
 \begin{align*}
  &\int_{\0_1}a^3v^0f\, d(q,p)+  \int_{\0_2}A^3V^0F\, d(q,p)+\int_{\s\times\{0\}}\frac{A^2V^0\varphi}{2}\, dq\\
  &=-\int_{\0_1}\left(a^{-1}v^0_{qq}v^0+ 3 a'v_p^0v^0\right)\, d(q,p)-\int_{\0_2}\left(A^{-1}V^0_{qq}V^0+ 3 A'V_p^0V^0\right)\, d(q,p)\\
  &\phantom{= \, \,}-\int_{\s\times\{0\}}\frac{3A}{2}V_p^0V^0\, dq.
 \end{align*}
 On the other hand, using the fact that  $(v^0,V^0)\in \ke\p_{(v,V)}(\cF,\cG)(\lambda_k,0) $  and integration by parts, we obtain that
  \begin{align*}
 &\int_{\0_1} a'v_p^0v^0\, d(q,p)+\int_{\0_2} A'V_p^0V^0\, d(q,p)\\
 &=-\int_{\s\times\{0\}}\frac{A}{2}V_p^0V^0\, dq+\int_{\0_1} \left(\frac{a}{2}(v_p^0)^2+\frac{1}{2a} (v_q^0)^2\right)\, d(q,p)\\
 &\phantom{= \, \,}+\int_{\0_2}\left(\frac{A}{2}(V_p^0)^2+\frac{1}{2A} (V_q^0)^2\right)\, d(q,p),
 \end{align*}
 which   together with the previous
relation gives 
 \begin{align*}
  &\int_{\0_1}a^3v^0f\, d(q,p)+  \int_{\0_2}A^3V^0F\, d(q,p)+\int_{\s\times\{0\}}\frac{A^2V^0\varphi}{2}\, dq\\
 &=-\int_{\0_1} \left(\frac{3a}{2}(v_p^0)^2+\frac{1}{2a}(v_q^0)^2\right)\, d(q,p)-\int_{\0_2}\left(\frac{3A}{2}(V_p^0)^2+\frac{1}{2A}(V_q^0)^2\right)\, d(q,p)<0.
 \end{align*}
 This proves the lemma.
\end{proof}

\paragraph{\bf The case $N\geq3$.}
When the vorticity function has several jumps, we proceed as in Section \ref{Sec:3}  and associate to problem \eqref{PB} a diffraction problem
\begin{equation}\label{DP'}
\left\{
\begin{array}{rllll}
(1+u_{i,q}^2)u_{i,pp}-2u_{i,p}u_{i,q}u_{i,pq}+u_{i,p}^2u_{i,qq}-\gamma_iu_{i,p}^3&=&0&\text{in $\0_i$,  \small{$i\in\{1,N\}$}},\\
\displaystyle 1+u_{N,q}^2+(2gu_N-Q)u_{N,p}^2-2\sigma \frac{u_{N,p}^2u_{N,qq}}{(1+u_{N,q}^2)^{3/2}}&=&0&\text{on $p=0$},\\
u_i&=&u_{i+1}&\text{on $p=p_i$, \small{$i\in\{1,N-1\}$}},\\
u_{i,p}&=&u_{i+1,p}&\text{on $p=p_i$,  \small{$i\in\{1,N-1\}$}},\\
u_1&=&0&\text{on $p=p_0$},
\end{array}
\right.
\end{equation}
where $\0_i:=\s\times(p_{i-1}, p_i)$ for all $1\leq i\leq N$.
Then, similarly to Lemma \ref{L:1} we see that each solution of \eqref{DP} defines a solution of problem \eqref{PB}.
For $N\geq 3$, our main result  is obtained by following the lines of the proof when $N=2$ with the evident modifications.  
There is only one point where the analysis is different, namely when showing that the Fr\'echet derivative
$\p_{(v_1,\ldots,v_N)}(\cF,\cG)(\lambda,0)\in \kL(X,Y\times Z)$
\footnote{For $N\geq 3$ it is natural to define:
\begin{align*}
 X&:=\left\{(v_1,\ldots,v_N)\in\Pi_{i=1}^N C^{2+\alpha}(\ov\0_i)\,:\, \text{$v_i=v_{i+1}$, $v_{i,p}=v_{i+1,p}$ on $ p=p_i$, $i\in\{1,N-1\}$, $0=v_1\big|_{p=p_0}$}\right\},\\
 Y&:=\Pi_{i=1}^N C^{\alpha}(\ov\0_i),\quad Z:=C^{\alpha}(\s).
\end{align*}
}  
is a Fredholm operator for all $\lambda\in(2\max_{[p_0,0]}\Gamma,\infty)$.
As in  the proof of Theorem \ref{T:1} one can show that $\p_{(v_1,\ldots,v_N)}(\cF,\cG)(\lambda,0)$ is a Fredholm operator of index zero provided that the unique solution $(z_1,\ldots, z_N)$
of the diffraction  problem
\begin{equation}\label{DP2}
\left\{
\begin{array}{rllll}
(a_i^3z_{i,p})_p+(a_iz_{i,q})_q&=&0&\text{in $\0_i$,  \small{$i\in\{1,N\}$}},\\
T_0z_{N}&=&0&\text{on $p=0$},\\
z_i&=&z_{i+1}&\text{on $p=p_i$, \small{$i\in\{1,N-1\}$}},\\
z_{i,p}&=&z_{i+1,p}&\text{on $p=p_i$,  \small{$i\in\{1,N-1\}\setminus\{l\}$}},\\
z_{l,p}-z_{l+1,p}&=&\varphi&\text{on $p=p_l$,}\\
z_1&=&0&\text{on $p=p_0$}.
\end{array}
\right.
\end{equation}
belongs to $\Pi_{i=1}^{N}C^{2+\alpha}(\ov\0_i)$ for all $1\leq l\leq N-1$ and all $\varphi\in C^{1+\alpha}(\s).$
Hereby, we set $a_i:=\sqrt{\lambda-2\Gamma}\in C^\infty([p_{i-1},p_{i}]) $ for all $1\leq i\leq N.$ 
Existence of a solution $(z_1,\ldots, z_N)$ of the linear diffraction problem \eqref{DP2} in the class 
\[
\left(C^{2+\alpha}(\s\times[p_0,p_1))\times\Pi_{i=2}^{N-1}C^{2+\alpha}(\0_i)\times C^{2+\alpha}(\s\times(p_{N-1},0])\right)\cap \Pi_{i=1}^{N}C^{1+\alpha}(\ov\0_i)
\]
is obtained by using elliptic maximum principles and the results of \cite{LU68}.
The problem lies in ensuring  $C^{2+\alpha} $ regularity at the interfaces where we have transmission conditions, an argument as in Theorem \ref{T:1} being impossible because $N$ is arbitrary.
Nevertheless we can use the result established  in Theorem \ref{T:1}.
Indeed, if $1\leq l\leq N-1$, then $(z_l,z_{l+1})$ solves the diffraction problem 
\begin{equation*}
\left\{
\begin{array}{rllll}
(a_l^3z_{l,p})_p+(a_lz_{l,q})_q&=&0&\text{in $\s\times((p_{l-1}+p_{l})/2,p_l), $}\\
(a_{l+1}^3z_{l+1,p})_p+(a_{l+1}z_{l+1,q})_q&=&0&\text{in $\s\times(p_{l},(p_{l}+p_{l+1})/2), $}\\
z_{l}&=&\phi_l&\text{on $p=(p_{l-1}+p_{l})/2$},\\
z_{l+1}&=&\phi_{l+1}&\text{on $p=(p_{l}+p_{l+1})/2$},\\
z_{l}&=&z_{l+1}&\text{on $p=p_l$},\\
z_{l,p}-z_{l+1,p}&=&\varphi_l&\text{on $p=p_l$,}
\end{array}
\right.
\end{equation*}
whereby $\varphi_l\in C^{1+\alpha}(\s)$ (possibly $\varphi_l=0$) and  
 \[
 \text{$\phi_l:=z_{l}\big|_{p=(p_{l-1}+p_{l})/2}$, $\phi_{l+1}:=z_{l+1}\big|_{p=(p_{l}+p_{l+1})/2}$ belong to $C^{2+\alpha}(\s)$}.
 \]
Then, we can write
\[
(z_l,z_{l+1})=(v_l,v_{l+1})+(w_l,w_{l+1}),
\]
whereby $v_l\in C^{2+\alpha}(\s\times[(p_{l-1}+p_{l})/2,p_l])$  solves the Dirichlet problem
\begin{equation*}
\left\{
\begin{array}{rllll}
(a_l^3v_{l,p})_p+(a_lv_{l,q})_q&=&0&\text{in $\s\times((p_{l-1}+p_{l})/2,p_l), $}\\
v_{l}&=&\phi_l&\text{on $p=(p_{l-1}+p_{l})/2$},\\
v_{l}&=&0&\text{on $p=p_{l}$},
\end{array}
\right.
\end{equation*}
the function  $v_{l+1}\in C^{2+\alpha}(\s\times[p_l,(p_{l}+p_{l+1})/2])$ is the solution of
\begin{equation*}
\left\{
\begin{array}{rllll}
(a_{l+1}^3v_{l+1,p})_p+(a_{l+1}v_{l+1,q})_q&=&0&\text{in $\s\times(p_{l},(p_{l}+p_{l+1})/2), $}\\
v_{l+1}&=&\phi_{l+1}&\text{on $p=(p_{l}+p_{l+1})/2$},\\
v_{l+1}&=&0&\text{on $p=p_l$,}
\end{array}
\right.
\end{equation*} 
and $(w_l,w_{l+1})$ solves the diffraction problem
\begin{equation*}
\left\{
\begin{array}{rllll}
(a_l^3w_{l,p})_p+(a_lw_{l,q})_q&=&0&\text{in $\s\times((p_{l-1}+p_{l})/2,p_l), $}\\
(a_{l+1}^3w_{l+1,p})_p+(a_{l+1}w_{l+1,q})_q&=&0&\text{in $\s\times(p_{l},(p_{l}+p_{l+1})/2), $}\\
w_{l}&=&0&\text{on $p=(p_{l-1}+p_{l})/2$},\\
w_{l+1}&=&0&\text{on $p=(p_{l}+p_{l+1})/2$},\\
w_{l}&=&w_{l+1}&\text{on $p=p_l$},\\
w_{l,p}-w_{l+1,p}&=&\xi&\text{on $p=p_l$,}
\end{array}
\right.
\end{equation*}
with
\[
\xi:=\varphi_l-v_{l,p}\big|_{p=p_l}+v_{l+1,p}\big|_{p=p_l}\in C^{1+\alpha}(\s).
\]
The arguments presented in the proof of Theorem \ref{T:1} show that the mapping $[\xi\mapsto w_{l}|_{p=p_l}]$ is a Fourier multiplier and it belongs to $\kL(C^{1+\alpha}(\s), C^{2+\alpha}(\s)).$
But then $(w_l,w_{l+1})$ has the same regularity as $(v_l,v_{l+1})$.
Thus, we have shown that $(z_1,\ldots, z_N)\in\Pi_{i=1}^{N}C^{2+\alpha}(\ov\0_i)$ and   the desired Fredholm property follows at once.
The proof of Theorem \ref{MT} follows now   similarly as in the case $N=2$ because all the arguments that we still need are trivial extensions of those presented when $N=2$ (and therefore we omit them).\medskip

\paragraph{\bf The dispersion relation} We end this paper by considering again the case $N=2.$
We determine  an explicit relation, the so-called dispersion relation, between the wave properties: mean depth $d$, the average thickness $d_1$ (resp. $d_2$) of the layer of vorticity $\gamma_1$ (resp. $\gamma_2$), 
the wavelength $L=2\pi/k$,   $k\in\N\setminus\{0\},$  the vorticities $\gamma_1$ and $\gamma_2$, and 
the  relative speed at the crest $c-{\bf u}(0)$ which has to be satisfied in order to have bifurcation from the laminar flow solutions. 
Computing the dispersion relation for the case $N=3$ is also possible, but the computations are much more involved. 

More precisely, we look for conditions on the physical parameters which guarantee that the problem  
\begin{align}\label{K}
  \left\{
\begin{array}{rllll}
(a^3v_k')'-k^2av_k&=&0&\text{$p_0<p<p_1$},\\
(A^3V_k')'-k^2AV_k&=&0&\text{$p_1<p<0$},\\
(g+\sigma k^2)V_k(0)&=&\lambda^{3/2}V_k'(0),\\
v_k(p_1)&=&V_k(p_1),\\
v_k'(p_1)&=&V_k'(p_1),\\
v_k(p_0)&=&0
\end{array}
\right.
 \end{align}
 possesses a nontrivial solution $(v_k,V_k)\in C^{\infty}([p_0,p_1])\times C^{\infty}([p_1,0]).$ 
 Assuming first that $\gamma_1 \gamma_2\neq0,$ we know that the general  solutions $v_k, V_K$ of the first two equations of \eqref{K}
 are given by \eqref{EQ:E}, with constants $\beta,\delta,\theta,\vartheta$ which need to be chosen such that the last four equations of \eqref{K} are also satisfied.
 Thus, we are left with a linear system of four  equations with four unknowns.
 Using algebraic manipulations,  we find that the latter system possesses nontrivial solutions exactly when  the following relation is satisfied
\begin{equation*}
\begin{aligned}
 &\frac{\gamma_2-\gamma_1}{a(p_1)}\left(g+\sigma k^2-\lambda^{1/2}\gamma_2-\lambda k\coth(k\Theta_2)\right)\\
 &=k(\coth(k\Theta_1)+\coth(k\Theta_2))\left(g+\sigma k^2-\lambda^{1/2}\gamma_2-\lambda k\coth(k(\Theta_1+\Theta_2))\right),
\end{aligned}
\end{equation*}
where  $\Theta_1$ and $\Theta_2$ are  given by \eqref{Expr}.
This formula is also true when $\gamma_1\gamma_2=0$ (if $\gamma_1=0$ (resp. $\gamma_2=0$), then $v_k$ (resp. $V_k$) solves an ordinary differential equation with constants coefficients and the computations are easier).
By virtue of the formula \eqref{Expr}, we also have  that $a(p_1)=A(p_1)=\lambda^{1/2}+\gamma_2\Theta_2.$

In order to obtain the desired dispersion relation, we need to give an interpretation to $\Theta_1$ and $\Theta_2.$ 
Therefore, we  recall that the height function $h$ gives the height of a particle above the flat bed,
which implies that
\[
d=\frac{1}{2\pi}\int_\s h(q,0)\, dq\qquad\text{and}\qquad d_1=\frac{1}{2\pi}\int_\s h(q,p_1)\, dq,
\]
where $d_1$ denotes the average height of the fluid layer bounded from below by the flat bed and from above by the interface $\eta_1:=h(\cdot,p_1)-d$ separating the two currents of different vorticities $\gamma_1$ and $\gamma_2$. 
The constant $d_2:=d-d_1>0$ is the average thickness of the fluid layer of vorticity $\gamma_2.$
It follows then readily from the formula \eqref{E:LFS} that in fact
\[
\Theta_1=d_1\qquad\text{and}\qquad\Theta_2=d_2.
\]
Consequently, we obtain the following dispersion relation 
\begin{equation}\label{DRa}
\begin{aligned}
 &\frac{\gamma_2-\gamma_1}{\lambda^{1/2}+\gamma_2d_2}\left(g+\sigma k^2-\lambda^{1/2}\gamma_2-\lambda k\coth(kd_2)\right)\\
 &=k(\coth(kd_1)+\coth(kd_2))\left(g+\sigma k^2-\lambda^{1/2}\gamma_2-\lambda k\coth(kd)\right),
\end{aligned}
\end{equation}
or equivalently
\begin{equation}\label{DRRR}
\begin{aligned}
 \lambda^{3/2}&+\frac{1}{k}\left[\gamma_2\left(d_2+\frac{\sinh(kd_2)\cosh(kd_1)}{\cosh(kd)}\right)+\gamma_1\frac{\sinh(kd_1)\cosh(kd_2)}{\cosh(kd)}\right]\lambda\\
 &+\tanh(kd)\left[\frac{\gamma_2^2d_2-(g+\sigma k^2)}{k}+\gamma_2(\gamma_1-\gamma_2)\frac{\sinh(kd_1)\sinh(kd_2)}{k^2\sinh(kd)}\right]\lambda^{1/2}\\
 &+\frac{(g+\sigma k^2)\tanh(kd)}{k^2}\left[\frac{(\gamma_2-\gamma_1)\sinh(kd_1)\sinh(kd_2)}{\sinh(kd)}-\gamma_2d_2 k\right]=0,
\end{aligned}
\end{equation}
where $\lambda $ is related to the relative speed at the crest by the equation $\sqrt{\lambda}=c-{\bf u}(0)$. 
In general, there is local bifurcation if the equation \eqref{DRRR} has a positive root for $\sqrt{\lambda}.$

It is easy to infer from \eqref{DRa} that if $\gamma_1=\gamma_2=:\gamma,$ then we obtain the dispersion relation for capillary-gravity waves on a linearly sheared current, cf. \cite{W06b},
as we have
\[
c-{\bf u}(0)=-\frac{\gamma}{2}\frac{\tanh(kd)}{k}+\sqrt{\frac{\gamma^2}{4}\left(\frac{\tanh(kd)}{k}\right)^2+(g+\sigma k^2)\frac{\tanh(k)}{k}}.
\]
Moreover, setting $\sigma=0$ in \eqref{DRa} we obtain the dispersion relation for gravity water waves obtained in \cite{CS11} when $\gamma_1=0,$ respectively in \cite{AC12a} for $\gamma_2=0$ (see also \cite{DH13}).
In the case when $\gamma_1=\gamma_2=0,$ we obtain from \eqref{DRa} the dispersion relation for irrotational capillary-gravity water waves
\[
c-{\bf u}(0)=\sqrt{(g+\sigma k^2)\frac{\tanh(k)}{k}},
\]
cf. \cite{Joh97, Li78}.

\bibliographystyle{abbrv}
\bibliography{BC_WS}
\end{document}